\documentclass[12pt]{article}
\usepackage{amsmath, amsthm, amsbsy, amssymb, graphicx}
\usepackage{enumerate}
\usepackage{url} % not crucial - just used below for the URL 

%\pdfminorversion=4
% NOTE: To produce blinded version, replace "0" with "1" below.
%\newcommand{\blind}{1}

% DON'T change margins - should be 1 inch all around.
\addtolength{\oddsidemargin}{-.5in}%
\addtolength{\evensidemargin}{-1in}%
\addtolength{\textwidth}{1in}%
\addtolength{\textheight}{1.7in}%
\addtolength{\topmargin}{-1in}%

\newcommand{\PP}{\mathbb{P}}
\newcommand{\EE}{\mathbb{E}}
\newcommand{\RR}{\mathbb{R}}
\newcommand{\XX}{\mathbb{X}}
\newcommand{\ind}{\boldsymbol{1}}
\newcommand{\bZ}{\boldsymbol{Z}}

\newcommand{\bx}{\boldsymbol{x}}
\newcommand{\by}{\boldsymbol{y}}

\newcommand{\origin}{\boldsymbol{0}}
\newcommand{\bzero}{\boldsymbol{0}}
% \numberwithin{equation}{section}
\newcommand{\dd}{\mathrm{d}}

\newcommand{\convp}{\stackrel{p}{\longrightarrow}}

%\renewcommand{\answer}[1]{ }
% To remove solutions just uncomment the renewcommand
\newtheorem{Theorem}{Theorem}[section]

\newtheorem{Definition}{Definition}[section]

\allowdisplaybreaks

% for add comments
\usepackage{color}
\newcommand{\sid}[1]{{\color{black} #1}}
\newcommand{\twang}[1]{{\color{black} #1}}

\begin{document}
\bibliographystyle{plain}

\def\spacingset#1{\renewcommand{\baselinestretch}%
{#1}\small\normalsize} \spacingset{1}

%%%%%%%%%%%%%%%%%%%%%%%%%%%%%%%%%%%%%%%%%%%%%%%%%%%%%%%%%%%%%%%%%%%%%%%%%%%%%%

%\if1\blind
{
  \title{\bf 2RV+HRV and Testing for Strong VS Full Dependence}
  \author{Tiandong Wang\thanks{
    The first author gratefully acknowledges \textit{National Natural Science Foundation of China Grant 12301660 and Science and Technology Commission of Shanghai Municipality Grant 23JC1400700}.}\hspace{.2cm}\\
    Shanghai Center for Mathematical Sciences, Fudan University\\
    and \\
    Sidney I. Resnick \\
    School of Operations Research and Information Engineering, Cornell University}
  \maketitle
} %\fi

%\if0\blind
{
  \bigskip
  \bigskip
  \bigskip
  \begin{center}
    {\LARGE\bf 2RV+HRV and Testing for Strong VS Full Dependence}
\end{center}
  \medskip
} %\fi

\bigskip
\begin{abstract}
Preferential attachment models of network growth are
  bivariate heavy tailed models for in- and out-degree with limit
  measures which either concentrate on a ray of positive slope from
  the origin or on all of the positive quadrant depending on whether
  the model includes reciprocity or not.  Concentration on the ray is
  called full dependence. If there were a reliable way to distinguish
  full dependence from not-full, we would have guidance about which
  model to choose. This motivates investigating tests that distinguish
  between (i) full dependence; (ii) strong dependence (support of the limit measure
  is a proper subcone of the positive quadrant); (iii)
  weak dependence (limit measure concentrates on positive quadrant). We give two test statistics,
  analyze their asymptotically normal behavior under full and not-full
  dependence, and discuss applicability using bootstrap methods applied to simulated and real data.
\end{abstract}

\noindent%
{\it Keywords:}  Second order regular variation, hidden regular variation, hypothesis test, asymptotic dependence.
\vfill

\spacingset{1.9} % DON'T change the spacing!

\section{Introduction}
In multivariate heavy tail estimation, the support of the limit
measure provides information on the dependence structure of the random
vector with the heavy tail distribution
(\cite{lehtomaa:resnick:2020}).
However, even in 
favorable circumstances in $\RR_+^2$, the positive quadrant in two dimensions,
scatter or diamond plots may have trouble distinguishing between 
\begin{itemize}
  \item {\it Full dependence\/} where the limit measure concentrates
    on a ray of the form $\{(x,y)\in \RR_+^2: y/x=m>0\}$;
    \item {\it Strong dependence\/} where \twang{the support of the limit measure
      is} a proper subcone of $\RR_+^2$ of the form
      $\{(x,y)\in \RR_+^2: y/x \in [m_l,m_u]\subsetneq [0,1]\}$;
    \item {\it Weak dependence\/} where the support of the limit measure is all of
       $\RR_+^2$; and
     \item {\it Asymptotic independence\/} where the limit measure
      concentrates on the axes $\RR_+\times \{0\} \cup \{0\}\times
      \RR_+$.
       \end{itemize}

Estimation and visualization techniques
that attempt to accurately
distinguish these  cases encounter complications, the most glaring of
which is the  requirement that data  be thresholded 
according to the distance from the origin. Plots can look
rather different depending on the choice of threshold. This is
illustrated by diamond plots \cite{das:resnick:2017,lehtomaa:resnick:2020}
in Figure \ref{fig:1} of Exxon (XOM) returns vs returns from Chevron (CVX) from
January 04, 2016 to December 30, 2022. The data $\{(x_i,y_i); 1 \leq i \leq 1761\}$ is mapped
to the $L_1$-unit sphere via $(x,y)\mapsto (x,y)/(\vert x\vert+\vert y
\vert)$ and then subsetted by retaining only the points with $k$
largest values of $(\vert x\vert+\vert y
\vert)$ where $k=100$ (left) or $k=500$ (right). Unsurprisingly, the
two plots give different impressions of where the limit measure
concentrates. 

\begin{figure}[h]
  \center
\includegraphics[width = \textwidth]{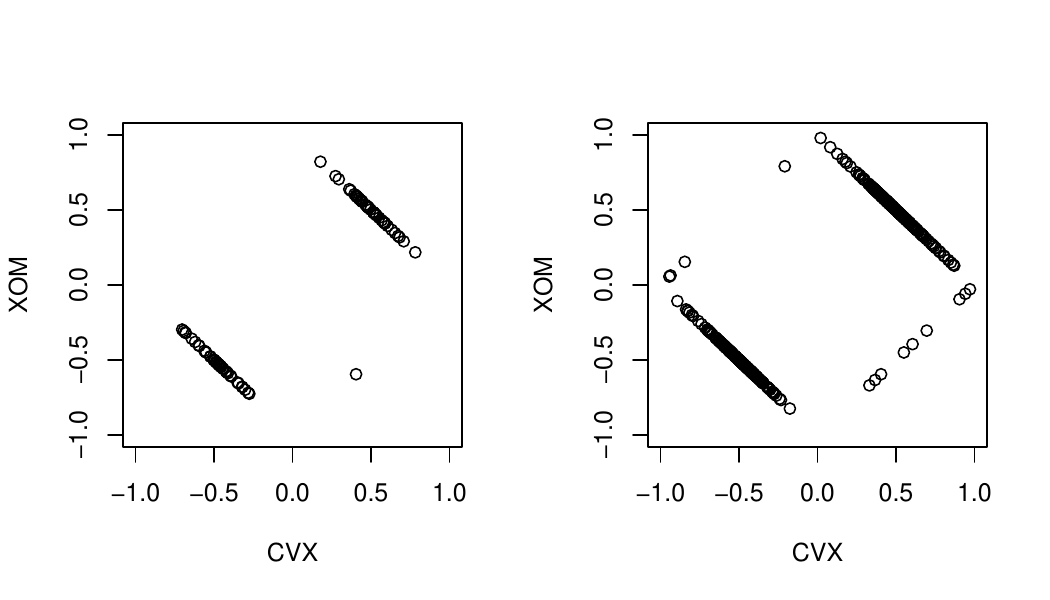}
\caption{Left: Diamond plot using the 100 points furthest from the
  origin. Right: Diamond plot using 500 most remote points.}
\label{fig:1}
\end{figure}
\medskip

An automatic
threshold selection technique is advocated in computer and
network science 
(\cite{clauset:shalizi:newman:2009,virkar:clauset:2014}) and implemented
in \cite{csardi:nepusz:2006} or \cite{gillespie:2015}. This technique is consistent
(\cite{bhattacharya:chen:vanderhofstad:zwart:2020, drees:janssen:resnick:wang:2020})
and can increase
ones' comfort level with threshold selection. However,  this method
offers no guarantee of best choice of threshold 
and has the additional drawback of preventing tail estimators such as Hill from
being asymptotically normal
(\cite{drees:janssen:resnick:wang:2020}). It would be desirable if there
were test statistics to guide us in choice of model from the bulleted
list above.

One reason for thinking about such problems was our interest in fitting
preferential attachment (PA) models of directed social networks
to data consisting of in- and out-degree of each node. The
classical PA model of directed edge growth
(\cite{krapivsky:redner:2001,bollobas:borgs:chayes:riordan:2003}) when
standardized to equal tail indices for each component gives a heavy
tail model with limit
measure concentrating on all of $\RR_+^2$
(\cite{resnick:samorodnitsky:towsley:davis:willis:wan:2016}). However,
for reasonable ranges of model parameters, these models 
do not correctly predict empirically observed
values for the reciprocity coefficient (\cite{kunegis:2021}); this is 
discussed in
\cite{wang:resnick:2022b, wang:resnick:2022a,
  cirkovic:wang:resnick:2022c,wang:resnick:2023hetero}. Adding the
reciprocity feature to the theoretical model means predicted values of
the reciprocity coefficient can match empirical values. However,
unlike the classical model, this heavy tailed model with reciprocity  has a limit
measure that concentrates on a ray. If there were a statistical test for
full dependence, it would provide guidance on whether one needs to add
the reciprocity feature to the model to obtain a satisfactory
statistical fit for social network data.

Of course network data or financial returns are not the same as iid observations but this
paper starts with the simple case and assumes all observations come
from a heavy tailed iid model by repeated sampling.

We give two test statistics which help distinguish full vs not-full
asymptotic dependence and show the statistics are asymptotically
normal but with different asymptotic variances, depending on the case.
A somewhat novel aspect of our approach is that hidden regular
variation (HRV) arises naturally and is employed in our proofs. The
reason HRV is relevant is that to get asymptotic normality with a
constant centering for estimators of heavy tailed data requires not
only the regular variation assumption for the underlying distribution
but also second order regular variation (2RV) which controls
deviations between a finite sample mean and an asymptotic mean; this
is discussed at 
length in
\cite{dehaan:ferreira:2006,resnickbook:2007,dehaan:resnick:1996,dehaan:1996,
  peng:1998}. In the context of two dimensional data, it is natural to
expect we need a two-dimensional second order regular variation
assumption (\cite{dehaan:resnick:1993,dehaan:deronde:1998,
  resnick:2002,das:kratz:2020}) and this coupled with multivariate
regular variation with a limit measure concentrating on a proper
subset of the state space lead naturally to hidden regular
variation. This is explained further in Section \ref{sec:rv}.

We propose test statistics that offer guidance on appropriateness
of the different cases and give conditions under which the statistics
are asymptotically normal, paying attention to the centering and
asymptotic variance. The proposed hypothesis testing framework is 
\sid{discussed further in Section \ref{sec:boot}.}

\section{Multivariate, Hidden and Second Order Regular Variation}\label{sec:rv}
Here is a review of notation and concepts necessary for formulating
and proving the results. We particularize the concept of regular
variation of measures on a complete, separable metric space $\XX$
for the
case of $\XX=\RR^2_+$ where visualization is easiest
(\cite{lindskog:resnick:roy:2014,hult:lindskog:2006a,das:mitra:resnick:2013,kulik:soulier:2020,basrak:planinic:2019}).

%\subsection{Multivariate, Hidden and Second Order Regular Variation}\label{subsec:rv}

Suppose $\{(X_i,Y_i); 1 \leq i \leq n\}$ are iid random elements of
$\RR_+^2 $ and based on evidence provided by observing these vectors,
we need to analyze the 
asymptotic dependence structure of the components. We do this in the
context of regular variation of measures.

\subsection{Multivariate regular variation.}\label{sub:mrv}
We
begin with the concept of
$\mathbb{M}$-convergence.

\begin{Definition}\label{def:Mconv} For a closed subcone $\mathbb{C}$ of $\mathbb{X}$,
let $\mathbb{M}(\mathbb{X}\setminus \mathbb{C})$ be the set of Borel
measures on $\mathbb{X}\setminus \mathbb{C}$ which are finite on
sets bounded away from $\mathbb{C}$, and
$\mathcal{C}(\mathbb{X}\setminus \mathbb{C})$ be the set of
continuous, bounded, non-negative functions on $\mathbb{X}\setminus
\mathbb{C}$ whose supports are bounded away from  $\mathbb{C}$. Then for $\mu_n,\mu \in \mathbb{M}(\mathbb{X}\setminus
\mathbb{C})$, we say $\mu_n \to \mu$ in
$\mathbb{M}(\mathbb{X}\setminus \mathbb{C})$, if $\int f\dd
\mu_n\to\int f\dd \mu$ for all $f\in \mathcal{C}(\mathbb{X}\setminus
\mathbb{C})$. 
\end{Definition} 

Without loss of generality (\cite{lindskog:resnick:roy:2014}), we can
 take functions in  $\mathcal{C}(\mathbb{X}\setminus
\mathbb{C})$ to be uniformly continuous.
The modulus
of continuity of a uniformly continuous function $f:\RR_+^p \mapsto
\RR_+$ is
\begin{equation}\label{e:defModCon}
\Delta_f(\delta)=\sup\{ \vert f(\bx)-f(\by)\vert :
d(\bx,\by)<\delta\}\end{equation}
 where $d(\cdot,\cdot)$ is an appropriate metric
on the domain of $f$. Uniform continuity means $\lim_{\delta \to 0}
\Delta_f(\delta)=0.$

Denote by $RV_c$, the  class of
regularly varying functions $f:\RR_+\mapsto \RR_+$ with index
$c\in\RR$ and write $f\in RV_c$.
The formal definition of multivariate regular variation  (MRV) of
distributions for the 
classical case  $\XX=\RR_+^2$ and
$\mathbb{C}=\{\bzero\}$ is next.

\begin{Definition}\label{def:MRV}
The distribution $\PP [(X_1,Y_1)\in\cdot \,]$ of a
 random vector $(X_1,Y_1)$ on $\mathbb{R}_+^2$,
  is (standard) regularly varying on $\mathbb{R}_+^2\setminus
  \{\origin\}$ with index $\alpha>0$  if
  there exists some {regularly varying} scaling function $b(t)\in \text{RV}_{1/\alpha}$ and a
  limit measure $\eta (\cdot)\in \mathbb{M}(\mathbb{R}_+^2\setminus
  \{\origin\})$ such that 
as $t\to\infty$,
\begin{equation}\label{eq:def_mrv}
t\PP [(X_1,Y_1)/b(t)\in\cdot \,]\rightarrow \eta(\cdot),\qquad\text{in }\mathbb{M}(\mathbb{R}_+^2\setminus \{\origin\}).
\end{equation}
It is  convenient to write  $\PP [(X_1,Y_1)\in\cdot \,]\in  
  \text{MRV}(\alpha, b(t), \eta, \mathbb{R}_+^2\setminus \{\origin\})$.
\end{Definition}

\subsubsection{Cartesian to polar and back.}\label{subsub:cartPolar}
When analyzing the asymptotic dependence between components of a
bivariate random vector
$\bZ$ satisfying \eqref{eq:def_mrv},  it is often informative  
to make a polar coordinate transform and
consider the transformed points located on the $L_1$-unit sphere
\begin{align}
\label{eq:map_L1}
(x,y)\mapsto\left(\frac{x}{x+y},\frac{y}{x+y}\right),
\end{align}
after thresholding the data according to the
$L_1$ norm. The plot of such data is the (positive-quadrant) diamond
plot and Figure \ref{fig:1}  is the 4-quadrant version.
\sid{In $\RR_+^2$, the convenient version of the}
$L_1$-polar coordinate transformation is
$T:\RR_+^2 \setminus \{\bzero\} \mapsto (\RR_+\setminus \{0\}) \times
[0,1],$ defined by 
$$T(x,y) = \bigl(x+y, \sid{x/(x+y)\bigr) =(r,\theta)}.$$ The inverse
transformation from polar to Cartesian coordinates is
$(r,\theta)\mapsto (r\theta, r(1-\theta)).$ The map $T$
disintegrates  $\eta(\cdot)$ into the product measure
$$\eta \circ T^{-1}(\cdot) =\nu_\alpha \times S(\cdot),$$
where $S(\cdot)$ can be taken to be a probability measure on $[0,1]$
called the angular measure, and $\nu_\alpha (\cdot)$ is the Pareto
measure with $\nu_\alpha (x,\infty)=x^{-\alpha},\,x>0$.

\subsection{Hidden regular variation.}\label{sub:hrv}
Denote by $\mathbb{C}_{a,b}$ the subcone of $\RR_+^2 $ such that
$$\mathbb{C}_{a,b}=\{(x,y)\in \RR_+^2: \theta=x/(x+y) \in
[a,b]\},\quad 0\leq a \leq b\leq 1.$$
When the limit measure of regular variation  $\eta(\cdot)$ 
concentrates on a proper subcone $\mathbb{C}_{a,b}\subset \XX =\RR_+^2$ of the full state space,
we may improve estimates of probabilities in the complement of the subcone,
if there is a
second {\it hidden\/} regular variation regime after removing the
subcone. 

\begin{Definition}
  The random vector $\bZ$  in $\RR^2_+$ has a distribution
  that is regularly varying on $\RR^2_+ \setminus \{\origin\}$ and has hidden regular variation on $\RR^2_+
\setminus\mathbb{C}_{a,b}$ if there exist $0 <\alpha\le \alpha_0$, scaling
functions $b(t) \in RV_{1/\alpha}$ and $b_0(t) \in RV_{1/\alpha_0}$
with $b(t)/b_0(t) \to\infty$ and limit measures $\eta$ concentrating
on $\mathbb{C}_{a,b}$ and another limit measure $\eta_0$, such
that %both \eqref{eq:def_mrv} and the following convergence hold: 
\begin{align}\label{eq:def_hrv}
\PP(\bZ\in\cdot)\in 
\text{MRV}(\alpha, b(t), \eta, \mathbb{R}_+^2\setminus \{\origin\})
\cap \text{MRV}(\alpha_0, b_0(t), \eta_0, \mathbb{R}_+^2\setminus \mathbb{C}_{a,b}).
%t\PP(\bZ/b_0(t)\in\cdot)\rightarrow \eta_0(\cdot),\qquad&\text{in }\quad\mathbb{M}(\mathbb{R}_+^2\setminus \mathbb{C}_{a,b}).
\end{align}
\end{Definition}

It is sometimes useful to characterize HRV is through the
\emph{generalized polar coordinate transformation} for
$\mathbb{R}_+^2\setminus \mathbb{C}_{a,b}$, assuming use of an associated metric
$d(\cdot,\cdot)$ satisfying $d(cx,cy) = cd(x,y)$ for scalars
$c>0$. The metric $d(\cdot,\cdot)$ that we use in practice is \sid{a
  scaled} 
$L_1$-metric. 
When using generalized polar coordinates with respect to the forbidden
zone $\mathbb{C}_{a,b}$, we define $\aleph_{\mathbb{C}_{a,b}} :=\{\bx
\in\mathbb{R}_+^2\setminus \mathbb{C}_{a,b} : d(\bx,\mathbb{C}_{a,b}) = 1\}$, the
locus of points at distance 1 from 
$\mathbb{C}_{a,b}$. 
Then the generalized polar coordinates are specified through the transformation,
$\text{GPOLAR}: \RR_+^2\setminus \mathbb{C}_{a,b}\mapsto (0,\infty)\times \aleph_{\mathbb{C}_{a,b}}$ with
\begin{align}
\label{eq:GPOLAR}
\text{GPOLAR}(\bx) = \left(d(\bx,\mathbb{C}_{a,b}),\frac{\bx}{d(\bx,\mathbb{C}_{a,b})}\right).
\end{align}
For a probability measure
$S_0(\cdot)$  on $\aleph_{\mathbb{C}_{a,b}}$, the generalized polar
coordinates allow re-writing the second part of \eqref{eq:def_hrv} as 
\[
t\PP\left[\left(\frac{d(\bZ, \mathbb{C}_{a,b})}{b_0(t)}, \frac{\bZ}{d(\bZ, \mathbb{C}_{a,b})}\right)\in\cdot\right]
\to (\nu_{\alpha_0}\times S_0)(\cdot)
\]
in $\mathbb{M}\bigl((\RR_+\setminus\{0\})\times
\aleph_{\mathbb{C}_{a,b}}\bigr)$. In particular $\PP [d(\bZ, \mathbb{C}_{a,b})>x] \in
RV_{-\alpha_0}$ is a lighter tail than $\PP [\Vert \bZ \Vert >x]\in
RV_{-\alpha}$. 
See \cite{das:mitra:resnick:2013} and \cite{lindskog:resnick:roy:2014} for details.

\subsection{Second order regular variation.}\label{sub:2RV}
In one dimension, second order regular variation (2RV) controls deviation of
finite sample means from asymptotic means and allows a useful
asymptotic normality for estimators such as the Hill estimator.
Our test statistics are
derived from two dimensional tail empirical measures and it is
reasonable to expect, therefore, that asymptotic normality requires
two dimensional second order
regular variation conditions.

\subsubsection{The second order condition.}\label{subsub:2rv}
There are several ways to state this condition which strengthens
multivariate regular variation. The first uses
$\mathbb{M}$-convergence. We need
a  function $A \in RV_{-\rho}$, $\rho>0$, and a
signed measure $\chi(\cdot) $  which is
not identically 0 and is 
the difference of two measures in 
$\mathbb{M} \bigl( (\RR_+\setminus \{0\}) \times
[0,1]\bigr) $,
such that in
  $\mathbb{M} \bigl( (\RR _+\setminus  \{0\}) \times [0,1] \bigr)$,
  \begin{align}\label{e:2RVmeas}
  \frac{1}{A(t)} \Biggl( t\PP \Bigl( (R/b(t), \Theta) \in \cdot \Bigr) - \nu_\alpha \times
  S (\cdot) \Biggr) \to \chi (\cdot),
  \end{align}
  meaning that evaluation of the signed measure on the left at a
  function $f\in \mathcal{C} (\bigl( (\RR _+\setminus  \{0\}) \times
  [0,1] \bigr)$ converges to the evaluation $\chi (f)$; or in symbols
\begin{equation}\label{e:withf}
  \frac{1}{A(t)} \Biggl( t\EE  f\bigl( R/b(t), \Theta\bigr)  
  -\iint_{\RR_+\setminus \{0\}\times [0,1] }f(r,\theta) \nu_\alpha (dr)  S (d\theta) \Biggr) \to \chi (f).
\end{equation}

The second way
  to phrase condition \eqref{e:2RVmeas} which looks more like convergence of
  distribution functions is
\begin{equation}\label{e:2rv}
  \frac{1}{A(t) } \Biggl( t\PP \Bigl(\frac{R}{b(t)}>r,\Theta \leq
    \theta\Bigr)
    -r^{-\alpha}S[0,\theta]\Biggr) \to \chi\bigl( (r,\infty) \times
    [0,\theta] \bigr )
  \end{equation}
\sid{locally uniform in $r\in (0,\infty)$ for each $\theta \in
  [0,1]$ where the limit is specified before \eqref{e:2RVmeas}.}

If $f_1(r) \in \mathbb{M}(\RR_+\setminus \{0\})$, set
$f(r,\theta):=f_1(r)\theta \in \mathbb{M} \bigl( (\RR _+\setminus
\{0\}) \times [0,1] \bigr)$ and inserting this into \eqref{e:withf}
gives
\begin{equation}\label{e:expVer}
  \frac{1}{A(t)} \Biggl( t\EE \Theta f_1\bigl( R/b(t) \bigr)  
  -\int_{[0,1]}\theta S(d\theta) \nu_\alpha (f_1)\Biggr) \to
  \int_{(0,\infty)\times [0,1]}\theta f_1(r)\chi (dr,d\theta).
\end{equation}
or in convergence of signed measures formulation,
\begin{equation}\label{e:expVerMeas}
  \frac{1}{A(t)} \Biggl( t\EE \Theta \epsilon_{ R/b(t) }(\cdot)
  -\int_{[0,1]}\theta S(d\theta) \nu_\alpha (\cdot)\Biggr) \to
  \iint_{(\cdot)\times [0,1]} \theta \chi (dr,d\theta).
\end{equation}
\sid{Note that \eqref{e:2RVmeas} and \eqref{e:2rv} are formulated so
  they can be marginalized and therefore the regularly varying distribution of
  $R$ is 2RV in one dimension. Also, the second order condition allows 
\eqref{e:expVerMeas} and \eqref{e:expVer}, so controls the expectation
of $\Theta$ on the set where $R$ is large. If we set
$$v(t)=E\Theta_1 \ind_{[R_1>t]},\quad \mu_S=\int_{[0,1]}\theta S(d\theta),$$
then \eqref{e:expVerMeas} gives as $t\to \infty$,
\begin{equation}\label{e:needE}
\frac{  tv(b(t)x)-\mu_S x^{-\alpha}}{A(t)} \to h(x):=
\iint_{((x,\infty))\times [0,1]} \theta \chi (dr,d\theta).
\end{equation}
which leads to  the more traditional form of the 2RV condition for
$v(t)$, namely
\begin{equation}\label{e:needE2}
\lim_{s\to\infty}  \frac{ \frac{v(sx)}{v(s)} -x^{-\alpha}    }{A\circ
  b^\leftarrow (s)}
=h(x)/\mu_S,
\end{equation} where  $A\circ
  b^\leftarrow  \in RV_{-\rho \alpha}$ and the limit function $h(x)$
  must be of the form (\cite{dehaan:stadtmueller:1996,
    peng:1998, dehaan:ferreira:2006}),
  $$h(x)=cx^{-\alpha} \Bigl( \frac{1-x^{-\rho \alpha  }}{\rho \alpha}
  \Bigr), \quad x>0,\, c\neq 0.$$
}

\subsubsection{2RV and HRV}\label{subsub:2+H}
We discuss why the second order condition \eqref{e:2RVmeas} together
with the 
 assumption $S([a,b]) = 1$ for 
 $[a,b]\subsetneq [0,1]$ implies HRV. The essentials of the argument
 in the context of asymptotic independence are in
  (\cite{dehaan:deronde:1998, resnick:2002}).
  
  \begin{Theorem}
    Assume the 2RV condition     \eqref{e:2RVmeas} or \eqref{e:2rv} 
 hold and $S([a,b]) = 1$ for  $[a,b]\subsetneq [0,1]$.
Set $U(t) =t/A(t) \in RV_{1+\rho}$, so that
    $U^\leftarrow (t) \in RV_{1/(1+\rho)} $ and therefore
    \begin{equation}\label{e:defb0}
      b_0(t):=b\circ U^\leftarrow (t) \in  RV_{1/(\alpha(1+\rho))},
      \,\rho>0.
    \end{equation}
  Then provided $\chi(\cdot)$ is not identically 0 on
  $(0,\infty)\times([0,1]\setminus [a,b])$,
\begin{align}
\label{eq:HRV}
\PP\left[\left(R,\Theta\right)\in\cdot\right]\in\text{MRV}(\alpha(1+\rho),
  b_0(t), \chi (\cdot), (\RR_+\setminus \{0\})\times([0,1]\setminus [a,b])).
\end{align}
%where $\eta_0$ satisfies $\eta_0((r,\infty)\times [0,\theta]) = \chi_0(r,\theta)$.
\end{Theorem}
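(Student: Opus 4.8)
The plan is to follow the asymptotic-independence template of \cite{dehaan:deronde:1998,resnick:2002}: restrict the second-order convergence \eqref{e:2RVmeas} to the region $(\RR_+\setminus\{0\})\times([0,1]\setminus[a,b])$, where the leading term $\nu_\alpha\times S$ vanishes because $S([a,b])=1$, and then make a time change converting the $1/A(t)$-rate statement into an ordinary regular-variation statement with the lighter scaling $b_0$.

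First I would note that passing to the smaller space is harmless at the level of $\mathbb{M}$-convergence: each $f\in\mathcal{C}\bigl((\RR_+\setminus\{0\})\times([0,1]\setminus[a,b])\bigr)$ also lies in $\mathcal{C}\bigl((\RR_+\setminus\{0\})\times[0,1]\bigr)$, and each set bounded away from $\RR_+\times[a,b]$ is bounded away from $\{0\}\times[0,1]$ (indeed from the origin, since $d(\cdot,\mathbb{C}_{a,b})\leq d(\cdot,\bzero)$), so both sides of \eqref{e:withf} continue to converge when restricted. For such an $f$ the integral $\iint f\,\nu_\alpha(dr)\,S(d\theta)=0$, because $f$ vanishes on a neighbourhood of $\RR_+\times[a,b]\supseteq\RR_+\times\mathrm{supp}(S)$. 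Writing $U(t)=t/A(t)$, \eqref{e:withf} therefore collapses on the restricted space to
$$U(t)\,\EE f\bigl(R/b(t),\Theta\bigr)\longrightarrow\chi(f),$$
that is, $U(t)\,\PP[(R/b(t),\Theta)\in\cdot]\to\chi(\cdot)$ in $\mathbb{M}\bigl((\RR_+\setminus\{0\})\times([0,1]\setminus[a,b])\bigr)$; taking $A>0$ without loss, the left side is a nonnegative measure, so $\chi$ restricted there is a genuine element of that $\mathbb{M}$-space, nonzero by hypothesis.

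Next I would carry out the time change. Since $A\in RV_{-\rho}$ with $\rho>0$, $U=t/A(t)\in RV_{1+\rho}$ has positive index, so $U^\leftarrow\in RV_{1/(1+\rho)}$, $U^\leftarrow(t)\to\infty$, and $U(U^\leftarrow(t))\sim t$ by the standard asymptotic-inverse property of regularly varying functions. For a fixed test function $f$ the displayed convergence is an ordinary limit of a function of $t$, so substituting $U^\leftarrow(t)$ for $t$ is legitimate; with $b_0(t)=b\circ U^\leftarrow(t)$ as in \eqref{e:defb0} this gives $U(U^\leftarrow(t))\,\EE f(R/b_0(t),\Theta)\to\chi(f)$, and dividing by $U(U^\leftarrow(t))/t\to1$ yields $t\,\EE f(R/b_0(t),\Theta)\to\chi(f)$, that is, $t\,\PP[(R/b_0(t),\Theta)\in\cdot]\to\chi(\cdot)$ in $\mathbb{M}\bigl((\RR_+\setminus\{0\})\times([0,1]\setminus[a,b])\bigr)$. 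Finally $b_0=b\circ U^\leftarrow\in RV_{(1/\alpha)(1/(1+\rho))}=RV_{1/(\alpha(1+\rho))}$, exactly the scaling index for tail index $\alpha(1+\rho)$, and the homogeneity $\chi(c\cdot)=c^{-\alpha(1+\rho)}\chi(\cdot)$ is then automatic as in any MRV statement, so the convergence just obtained is precisely membership in $\text{MRV}\bigl(\alpha(1+\rho),b_0(t),\chi(\cdot),(\RR_+\setminus\{0\})\times([0,1]\setminus[a,b])\bigr)$. (The first regime $\text{MRV}(\alpha,b(t),\nu_\alpha\times S,\ldots)$ comes for free from \eqref{e:2RVmeas} since $A(t)\to0$, and $b(t)/b_0(t)\in RV_{\rho/(\alpha(1+\rho))}\to\infty$, so the two regimes together are exactly HRV.)

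The only genuinely delicate point is the time change: one must check that substituting $t\mapsto U^\leftarrow(t)$ inside $\mathbb{M}$-convergence is valid --- it is, because $\mathbb{M}$-convergence is checked test-function by test-function and each produces a real limit, while $U^\leftarrow(t)\to\infty$ --- and that $U(U^\leftarrow(t))\sim t$, which is where $\rho>0$ enters so that $U$ has strictly positive index and a well-behaved asymptotic inverse. Everything else --- the restriction step, recognizing $\chi$ as a nonnegative $\mathbb{M}$-measure on the reduced space, and the bookkeeping of regular-variation indices --- is routine.
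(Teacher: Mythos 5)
Your proof is correct and follows essentially the same route as the paper's: restrict the 2RV convergence \eqref{e:2RVmeas} to $(\RR_+\setminus\{0\})\times([0,1]\setminus[a,b])$, observe that the leading term $\nu_\alpha\times S$ vanishes there because $S([a,b])=1$, and then make the time change $t\mapsto U^\leftarrow(t)$ with $U=t/A(t)$ to convert the $1/A(t)$-rate statement into an ordinary $\mathbb{M}$-convergence with scaling $b_0=b\circ U^\leftarrow$. The paper states this tersely on sets of the form $(r,\infty)\times I$ with $S(I)=0$ whereas you work with test functions and spell out the restriction and time-change justifications, but the underlying argument is the same.
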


\begin{proof}
  For $r>0$, $I\subset [0,1]$ such that
  $\chi(\partial ((r,\infty)\times  I))=0$, 
%
%\sid{  To prove \eqref{eq:HRV}, let $f \in \mathcal{C} ((\RR_+\setminus
 % \{0\})) \times ([0,1]\setminus [a,b])$ 
\begin{align*}
\chi((r,\infty)\times I)=&\lim_{t\to \infty} 
                           \frac{t
                           \PP[R/b(t)>r,\Theta \in I  ]- r^{-\alpha}S(I)}{ A(t) }\\
\intertext{and if $S(I)=0$, this is }
=&\lim_{t\to \infty}
\frac{t\PP[R/b(t)>r,\Theta \in I  ]}{ A(t) }
\end{align*}
Set $U(t):= t/A(t) $, $b_0:= b\circ U^\leftarrow$
and after a change of variable, the proof of
\eqref{eq:HRV}  is
complete. 
\end{proof}

\section{Testing the existence of strong dependence}
For strong convergence,
we assume that $0\leq a\leq b\leq1$ fixed, with
$[a,b]\subsetneq [0,1]$ and $S([a,b])=1.$
The condition $\theta =x/(x+y) \in [a,b]$
translates to
$$(x,y)\in \{(u,v)\in \RR_+^2:  v/u \in [b^{-1}-1 , a^{-1}
-1]\}=:\mathbb{C}_{a,b}.$$
So the closed cone $\mathbb{C}_{a,b}$ is the set of first quadrant
  points between the two rays $y=m_u x$ and $y=m_l x$, $x>0$, where
  the slopes are $m_u=a^{-1}-1$, $m_l=b^{-1}-1$
  and since $a\leq b$, we have $m_u \geq m_l$.
Define the scaled distance from $(x,y)\in
\mathbb{R}_+^2$ to $\mathbb{C}_{a,b}$ as 
\begin{equation}
d^*((x,y),\mathbb{C}_{a,b}) := \max\left\{(b^{-1}-1)x-y, y-(a^{-1}-1)x,
  0\right\}. \label{e:d}
\end{equation}
Note
\begin{enumerate}
\item when $(x,y)$ is above cone $\mathbb{C}_{a,b}$ so that $y/x >m_u$ and
thus $y > (a^{-1}-1)x$, $d^*((x,y),\mathbb{C}_{a,b}) = y-(a^{-1}-1)x;$
\item when $(x,y)$ is below the cone $\mathbb{C}_{a,b}$ so that $y/x <m_l$
  and $y<(b^{-1}-1)x$, $d^*((x,y),\mathbb{C}_{a,b}) =  (b^{-1}-1)x -y;$
\item when $(x,y  ) \in \mathbb{C}_{a,b}$, $d^*((x,y),\mathbb{C}_{a,b}) =0.$
\item when $\mathbb{C}_{a,b}=\{(x,y): y=(\theta_0^{-1} -1)x, x>0\}$
  because $S\{\theta_0\}=1$,  then $d^*((x,y),\mathbb{C}_{a,b})=
  \vert (\theta_0^{-1}-1)x-y  \vert .$
\end{enumerate}

Using generalized polar coordinates, the HRV assumption on
$\RR_+^2\setminus\mathbb{C}_{a,b}$ reads 
\[
  t\PP\left(\left(
      \frac{d^*\bigl((X,Y),\mathbb{C}_{a,b}\bigr)}{b_0(t)},
      \frac{(X,Y)}{d^*((X,Y),\mathbb{C}_{a,b})} \right)\in
  \cdot\right)\longrightarrow \nu_{\alpha_0} \times S_0(\cdot) 
\]
in $\mathbb{M}( (\RR_+\setminus \{0\}) \times \aleph_{\mathbb{C}_{a,b}})$ and in
particular
$P[d^*\bigl((X,Y),\mathbb{C}_{a,b} \bigr)>x] \in RV_{-\alpha_0}$ \sid{and
  assuming 2RV 
  from the previous section, $\alpha_0=\alpha(1+\rho)$.}

Let $\{(X_i,Y_i): i\ge 1\}$ be iid copies of $(X,Y)$, and set $R_i:= X_i+Y_i$.
We also define $(X_i^*, Y_i^*)$ to be the pair of random variables such that $X_i^*+Y_i^*$ is the $i$-th largest order statistic of $\{R_i: 1\le i\le n\}$, i.e. $R_{(i)}$.
Consider the following hypotheses: for fixed and known $0<a\le b<1$,
\begin{align}
H^{(1)}_0: \, S([a,b]) = 1,\qquad H^{(1)}_a:\, S([a,b]) < 1.
\label{eq:test_strong}
\end{align}

In Theorem~\ref{thm:test_strong}, we propose a test statistic for the test in \eqref{eq:test_strong}. 

\begin{Theorem}\label{thm:test_strong}
\sid{Assume the 2RV condition} \eqref{e:2rv} holds, 
$\alpha_0\equiv \alpha(1+\rho)>1$ and $b_0(t)$ is defined in \eqref{e:defb0}.
Define the statistic
\begin{align}
D^*_n:=&\frac{1}{{k_n}}\sum_{i=1}^{k_n}
         \left(1+\frac{d^*\bigl((X^*_i,Y^*_i),\mathbb{C}_{a,b}\bigr)}{R_{(k_n)}}\right)\log\frac{R_{(i)}}{R_{(k_n)}}\nonumber \\
=&H_{k,n} +\frac{1}{{k_n}}\sum_{i=1}^{k_n}
   \left(\frac{d^*\bigl((X^*_i,Y^*_i),\mathbb{C}_{a,b}\bigr)}{R_{(k_n)}}\right)\log\frac{R_{(i)}}{R_{(k_n)}}\label{e:defD*}
\end{align}
where $H_{k_n,n}$ is the Hill estimator of $1/\alpha$ applied to $\{R_i,
1\leq i\leq n\}$ based on $k_n$ upper order statistics, and
 $\{k_n\}$ is an intermediate sequence \sid{(i.e. $k_n\to \infty$, $n/k_n
 \to \infty, $ $n\to\infty$)} satisfying 
\begin{align}
\label{eq:cond1}
\sqrt{k_n} \frac{b_0(n/k_n)}{b(n/k_n)}\to 0,\qquad n\to\infty.
\end{align}
Under $H^{(1)}_0$ as given in \eqref{eq:test_strong}, we have
\begin{equation}\label{e:AN}
\sqrt{k_n}(D^*_n - 1/\alpha)\Rightarrow \frac{1}{\alpha}N(0,1).
\end{equation}
\end{Theorem}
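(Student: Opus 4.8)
The plan is to work entirely from the decomposition $D^*_n = H_{k_n,n} + R_n$ already displayed in \eqref{e:defD*}, where $R_n := \frac1{k_n}\sum_{i=1}^{k_n}\frac{d^*((X^*_i,Y^*_i),\mathbb{C}_{a,b})}{R_{(k_n)}}\log\frac{R_{(i)}}{R_{(k_n)}}$, and to prove two statements separately: (i) $\sqrt{k_n}(H_{k_n,n}-1/\alpha)\Rightarrow \frac1\alpha N(0,1)$, and (ii) $\sqrt{k_n}\,R_n\convp 0$. Then \eqref{e:AN} follows by Slutsky, and in particular the asymptotic variance is exactly that of the Hill estimator precisely because the remainder $R_n$ is asymptotically negligible on the $\sqrt{k_n}$ scale. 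For (i), I would note that the 2RV condition \eqref{e:2rv} marginalizes in the angular coordinate (formally take $\theta=1$), so the distribution of $R=X+Y$ is second-order regularly varying in one dimension with index $\alpha$ and an auxiliary function of the order of $A$; unwinding the definition \eqref{e:defb0} of $b_0$ shows that \eqref{eq:cond1} is no weaker than the usual asymptotic-bias-killing condition $\sqrt{k_n}\cdot(\text{2nd order function})(n/k_n)\to 0$. The classical asymptotic normality of the Hill estimator under a second-order condition and an intermediate sequence (see \cite{dehaan:ferreira:2006,resnickbook:2007}) then gives (i) with asymptotic variance $1/\alpha^2$.

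For (ii) the key algebraic observation is that $d^*((x,y),\mathbb{C}_{a,b}) = (x+y)\,g\!\left(\tfrac{x}{x+y}\right)$, where $g(\theta):=\max\{b^{-1}\theta-1,\,1-a^{-1}\theta,\,0\}$ is a bounded, nonnegative, continuous function that vanishes exactly on $[a,b]$. Hence $R_n\ge 0$ and, under $H^{(1)}_0$, $\int_{[0,1]}g\,dS=0$: at first order every one of the top $k_n$ order-statistic points has angle in $\mathrm{supp}(S)=[a,b]$, where $g\equiv 0$, so $R_n$ is a purely second-order quantity governed by the hidden regular variation regime in which $\PP[d^*((X,Y),\mathbb{C}_{a,b})>x]\in RV_{-\alpha_0}$ with $\alpha_0>1$. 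Since $R_n\ge 0$, by Markov's inequality it suffices to show $\sqrt{k_n}\,\EE R_n\to 0$. I would use $R_{(k_n)}/b(n/k_n)\to 1$ in probability to restrict to an event $\{\,|R_{(k_n)}/b(n/k_n)-1|\le\epsilon_n\,\}$ with $\epsilon_n\downarrow0$ slowly (probability $\to1$), replace the random threshold $R_{(k_n)}$ by the deterministic $\tilde t_n:=(1-\epsilon_n)b(n/k_n)$, and dominate $R_n$ in expectation by $\frac{n}{k_n\tilde t_n}\,\EE\!\big[d^*((X,Y),\mathbb{C}_{a,b})\,\log^+\!\tfrac{R}{\tilde t_n}\,\ind_{\{R>\tilde t_n\}}\big]$.

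To estimate this expectation, apply the 2RV condition in the form \eqref{e:withf} to a smoothly $r$-truncated version of the test function $(r,\theta)\mapsto r\,g(\theta)\log^+ r$: its $\nu_\alpha\times S$-integral vanishes because $\int g\,dS=0$, so the truncated part is $O(A(n/k_n))$, while the un-truncated (large-$r$) tail is controlled by a direct comparison using $\PP[d^*(\cdot)>v]\in RV_{-\alpha_0}$, $\alpha_0>1$, together with $d^*((x,y),\mathbb{C}_{a,b})\le C_g(x+y)$, the bound $\PP(d^*>v,R>s)\le\min\{\PP(d^*>v),\PP(R>s)\}$ and Karamata's theorem, giving a contribution $O\!\big(M^{-c}\,b_0(n/k_n)/b(n/k_n)\big)$ for a truncation level $M$ and some $c>0$. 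Letting $M\to\infty$ after $n\to\infty$ yields $\EE R_n = O(A(n/k_n))$, and since $A\in RV_{-\rho}$ decays strictly faster than $b_0/b\in RV_{-\rho/\alpha_0}$ (here is where $\alpha_0>1$ is used), $\EE R_n=o\!\big(b_0(n/k_n)/b(n/k_n)\big)$, so $\sqrt{k_n}\,\EE R_n\to0$ by \eqref{eq:cond1}, proving (ii).

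The heart of the proof, and the step I expect to require the most care, is (ii). One must genuinely exploit the vanishing of $g$ on $\mathrm{supp}(S)$ — equivalently, the lighter hidden index $\alpha_0>1$ — because $\alpha$ itself may be $\le1$, so $R$ need not be integrable and $g$ cannot merely be bounded by a constant: it is precisely $\int g\,dS=0$ that makes $\EE[d^*\log^+(R/\tilde t_n)\ind_{\{R>\tilde t_n\}}]$ both finite and of the correct small order. Two technical points demand attention: (a) performing the replacement of $R_{(k_n)}$ by a deterministic threshold without losing the rate, which is handled by letting $\epsilon_n\to0$ slowly and using regular-variation (Potter) estimates; and (b) confirming that the relevant second-order expectation is of order $A(n/k_n)$, hence $o(b_0(n/k_n)/b(n/k_n))$, so that \eqref{eq:cond1} is exactly the right calibration — this is where the precise link between the auxiliary function $A$ of \eqref{e:2rv} and the hidden scaling $b_0=b\circ U^\leftarrow$ of \eqref{e:defb0} (and the matching of slowly varying factors) is used. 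A minor additional point, already noted in (i), is verifying that \eqref{eq:cond1} implies the standard no-asymptotic-bias condition for the Hill estimator applied to $R$.
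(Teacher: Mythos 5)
Your proof is correct but takes a genuinely different route from the paper's.

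The paper works at the \emph{process} level: it shows the weighted tail empirical process $W_n(y)=\sqrt{k_n}\bigl(\tfrac1{k_n}\sum_i(1+d^*_i/R_{(k_n)})\ind\{R_i/b(n/k_n)>y\}-y^{-\alpha}\bigr)$ converges in $D(0,\infty)$ to $W(y^{-\alpha})$, then composes with $R_{(k_n)}/b(n/k_n)$, applies Vervaat's inversion lemma, and finally pushes the process through $x\mapsto\int_1^\infty x(s)s^{-1}\,\dd s$ to turn indicators into logarithms and produce $D^*_n$. The key intermediate step is showing that the extra $d^*$ term is negligible in $D(0,\infty)$, which the paper does by splitting on whether $d^*_i>b_0(n/k_n)\epsilon$ or not: the small-$d^*$ piece is bounded by $\epsilon\,(b_0/b)\cdot O_p(1)$ via the usual tail empirical process, and the large-$d^*$ piece is shown $O_p(b_0/b)$ by Markov plus Karamata on the HRV tail (using $\alpha_0>1$); \eqref{eq:cond1} then kills both. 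No 2RV is needed for the $d^*$ term — 2RV enters only via the marginal bias condition for $R_1$. You instead operate directly on the one-shot statistic $R_n=D^*_n-H_{k_n,n}$, cite the classical Hill asymptotics for the first piece, and use a Markov/expectation argument with a deterministic threshold replacement for the second, splitting on $\{R\le Mb\}$ versus $\{R>Mb\}$ and using 2RV (via $\int g\,\dd S=0$) for the moderate piece and a joint-tail/min bound plus HRV plus Karamata for the tail. Both arguments ultimately rely on exactly the same ingredients — $\alpha_0>1$ for Karamata integrability, \eqref{eq:cond1} as the calibration, 2RV for the Hill bias — so your approach is admissible. What your route buys is a more elementary, non-functional argument that avoids $D(0,\infty)$ machinery, Vervaat inversion, and the continuity justification of the integral map; what it loses is that the process-level lemmas established by the paper are reused in the proofs of Theorems 4.1 and 4.2, so the paper's investment pays off later. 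One other trade-off: your moderate-$R$ piece invokes 2RV where the paper gets away with HRV alone, and your tail piece requires a genuine bivariate joint-tail computation (the $\min\{\PP(d^*>v),\PP(R>s)\}$ estimate, exploiting both the $-\alpha_0$ and $-\alpha$ indices) whereas the paper simply drops the $R$-indicator and bounds the large-$d^*$ sum unconditionally. That joint-tail step is the one place in your sketch that would require the most careful writing; as outlined it does give the claimed $O(M^{-c}\,b_0(n/k_n)/b(n/k_n))$ with $c=\alpha(\alpha_0-1)/\alpha_0-\delta$, so the idea goes through.
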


\twang{The proof of Theorem~\ref{thm:test_strong} is  in
  Section 1 of the supplement;  here we give some remarks:}
% \begin{Remark}
%\newpage
%\noindent{\sc Remarks.}
\begin{enumerate}
  \item Of course, $D_n^*$ depends on $a,b$ but this dependence is
    suppressed in the notation. A consistent estimator of $a,b$ is
    suggested in Section \ref{subsub:estimate} but for now we assume
    $a,b$ are fixed and known.
\item Under $H^{(1)}_0$, for $(X_i, Y_i)$ such that $R_i$ is large, the distance from $(X_i,Y_i)$ to $\mathbb{C}_{a,b}$
should be small with high probability and therefore $D_n^*$
  should be close to the Hill estimator which is asymptotically
  normal.
  The proof of Theorem \ref{thm:test_strong} shows that when
  $S[a,b]=1$, 
  \begin{equation}\label{e:small}
\sqrt{k_n } (D_n^* -H_{k_n,n})\Rightarrow 0,\quad
(n\to\infty).\end{equation} 
\item 
The condition $\alpha_0=\alpha (1+\rho)>1$ is mild as it is 
  rare in practice for tails to be so heavy that $\alpha<1$.
\item
The proof is based on asymptotic normality of the tail
  empirical measure. For treatments explaining the need for the second
  order condition, see \cite[Section 9.1]{resnickbook:2007} or 
  \cite{dehaan:ferreira:2006} and for  background
  \cite{straf:1972,csorgo:mason:1985,dehaan:resnick:1993,dehaan:resnick:1998,
    einmahl:1987,csorgo:haeusler:mason:1991a,
   csorgo:haeusler:mason:1991b,
   mason:turova:1994}.
\item  Theorem~\ref{thm:test_strong} suggests that for fixed $a,b$, we  reject
  $H^{(1)}_0$ in \eqref{eq:test_strong} if 
  $\vert D_n^*-1/\alpha\vert>1.96/(\alpha/\sqrt{k_n})$.
 If we choose too wide an interval $[a,b]\subsetneq [0,1]$, then the test statistic $D_n^*$
becomes closer to $H_{k,n}$ as more data points are included in
$\mathbb{C}_{a,b}$.  Failure to reject for the fixed interval means
also that one fails to reject for any bigger interval. So
using only $D_n^*$, we cannot distinguish whether the  
support of $S(\cdot)$ is in $[a,b]$ or a subset of $[a,b]$ and, in particular,
if we fail to reject $H_0^{(1)}$, it could be the support is
$\{\theta_0\}$ for some 
$\theta_0\in [a,b]$. Therefore, in the next section, we give another test
statistic that helps decide whether $(X,Y)$ is  
asymptotically fully or strongly dependent.

\item If $[a,b]=[0,1]$, then $\mathbb{C}_{a,b}=\RR_+^2$ and
$d^*\bigl((X^*_i,Y^*_i),\mathbb{C}_{a,b}\bigr)  =0$ so $D_n^*=H_{k_n,n} $
and \eqref{e:AN} still holds \sid{without any restriction on $\{k_n\}$
  beyond it being an intermediate sequence.}

\end{enumerate}

%\end{Remark}

\section{Full vs strong dependence}
Consider the following hypothesis test: 
\begin{align}\label{eq:test_fullstrong}
  H^{(2)}_0: S(\{\theta_0\}) = 1\qquad H^{(2)}_a: S([a,b])=1.
\end{align} 
where $\theta_0\in [a,b]$, and to capitalize on hidden regular variation
resulting from 2RV, we need the assumption that $[a,b]\subsetneq
[0,1]$ is a proper subset of $[0,1].$
Since $\theta_0\in [a,b]$ and $D_n^*$ given in Theorem~\ref{thm:test_strong} is unable to 
distinguish between the two hypotheses in \eqref{eq:test_fullstrong}, we now propose another test statistic.
Let $\Theta_i^*$ be the concomitant of $R_{(i)}$, and define
\begin{align}
\label{eq:defTn}
T_n := \frac{\sum_{i=1}^{k_n}\Theta_i^*\log\frac{R_{(i)}}{R_{(k_n)}}}{\sum_{i=1}^{k_n}\Theta_i^*}.
\end{align}

The next results \sid{recommend} we distinguish between strong and
full dependence by assessing the asymptotic variance of
$T_n$. \sid{The methodology is} 
discussed more fully in the next Section \ref{sec:boot}. Under $H^{(2)}_0$
the asymptotic variance of $T_n$ is $1/\alpha^2$ but under $H^{(2)}_a$ the
asymptotic variance is strictly greater than $1/\alpha^2$.

\subsection{Full dependence}\label{sub:full}
We begin by discussing a limit theorem that can aid in 
distinguishing between full and strong dependence.
This theorem is posed under the assumption 
$H^{(2)}_0$ in \eqref{eq:test_fullstrong} that full dependence holds with the limit
angular measure concentrating at a point $\theta_0 \in (0,1)$.
The proof machinery is similar to that later in Theorem~\ref{thm:strong_full_HA}, with details included in the supplement.

\begin{Theorem}\label{thm:strong_full}
Assume $H^{(2)}_0$ holds and the angular measure $S(\cdot)
=\epsilon_{\theta_0}(\cdot),\,\theta_0 \in (0,1)$.
Suppose the 2RV condition in \eqref{e:2RVmeas} holds with \sid{ $A(t) \in
RV_{-\rho}$, $\rho>0$.}  Define \sid{$b_0(t) $ as in \eqref{e:defb0}}
so $b_0(t) \in RV_{1/(\alpha(1+\rho))}$ and $\alpha_0=\alpha(1+\rho)$.
 Let $\{k_n\}$ be an intermediate sequence satisfying \eqref{eq:cond1}.Then for $W(\cdot)$ a standard Brownian Motion  we have 
\begin{align}
\sqrt{k_n}\left(T_n - \frac{1}{\alpha}\right) &\Rightarrow
  \frac{1}{\alpha}\left(\int_0^1 W(s)\frac{\dd
  s}{s}-W(1)\right)
  \sid{\stackrel{d}{=} \frac{1}{\alpha}W(1)\sim
  N(0,1/\alpha^2).}\label{e:fullvar}
\end{align}
\end{Theorem}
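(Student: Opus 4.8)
\emph{Strategy.} The plan is to show that, under $H^{(2)}_0$ with $S=\epsilon_{\theta_0}$, the statistic $T_n$ differs from the Hill estimator $H_{k_n,n}$ of $1/\alpha$ by $o_p(1/\sqrt{k_n})$, and then to quote the classical asymptotic normality of $H_{k_n,n}$ under a 2RV bias condition. First, writing $X_i=\Theta_iR_i$, $Y_i=(1-\Theta_i)R_i$ and using \eqref{e:d} with $\mathbb{C}_{a,b}=\{(x,y):y=(\theta_0^{-1}-1)x\}$, one gets the pointwise identity $d^*\bigl((X_i,Y_i),\mathbb{C}_{a,b}\bigr)=\theta_0^{-1}R_i\,|\Theta_i-\theta_0|$, equivalently $|\Theta_i-\theta_0|=\theta_0\,d^*\bigl((X_i,Y_i),\mathbb{C}_{a,b}\bigr)/R_i$. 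Split $T_n=A_n/B_n$ with $A_n=k_n^{-1}\sum_{i=1}^{k_n}\Theta^*_i\log(R_{(i)}/R_{(k_n)})$ and $B_n=k_n^{-1}\sum_{i=1}^{k_n}\Theta^*_i$, so that $B_n-\theta_0=k_n^{-1}\sum_{i=1}^{k_n}(\Theta^*_i-\theta_0)$ and $A_n-\theta_0H_{k_n,n}=k_n^{-1}\sum_{i=1}^{k_n}(\Theta^*_i-\theta_0)\log(R_{(i)}/R_{(k_n)})$. Because $R_{(i)}\ge R_{(k_n)}$ and $\sup_{x\ge1}x^{-1}\log x=e^{-1}$, the identity bounds $|B_n-\theta_0|\le\theta_0Q_n$ and $|A_n-\theta_0H_{k_n,n}|\le\theta_0e^{-1}Q_n$, where $Q_n:=(k_nR_{(k_n)})^{-1}\sum_{i=1}^{k_n}d^*\bigl((X^*_i,Y^*_i),\mathbb{C}_{a,b}\bigr)$. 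Hence it suffices to prove $\sqrt{k_n}\,Q_n\convp0$ and to add the Hill limit.

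\emph{The key estimate.} Set $J_n:=(k_nb_0(n/k_n))^{-1}\sum_{i=1}^{n}d^*\bigl((X_i,Y_i),\mathbb{C}_{a,b}\bigr)\ind_{\{R_i\ge R_{(k_n)}\}}$ and factor
\[
\sqrt{k_n}\,Q_n=\frac{b(n/k_n)}{R_{(k_n)}}\cdot\sqrt{k_n}\,\frac{b_0(n/k_n)}{b(n/k_n)}\cdot J_n .
\]
The first factor is $1+o_p(1)$ since $R_{(k_n)}/b(n/k_n)\convp1$ for an intermediate sequence; the second $\to0$ by \eqref{eq:cond1}; and $J_n=O_p(1)$. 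For the last claim, on the probability-$\to1$ event $\{R_{(k_n)}\ge(1-\epsilon)b(n/k_n)\}$ one has $J_n\le\overline J_n:=(k_nb_0(n/k_n))^{-1}\sum_{i=1}^nd^*\bigl((X_i,Y_i),\mathbb{C}_{a,b}\bigr)\ind_{\{R_i>(1-\epsilon)b(n/k_n)\}}$, and with $t:=n/k_n$,
\begin{align*}
\EE\,\overline J_n&=\int_0^\infty t\,\PP\bigl[d^*\bigl((X_1,Y_1),\mathbb{C}_{a,b}\bigr)>b_0(t)v,\ R_1>(1-\epsilon)b(t)\bigr]\,\dd v\\
&\le t\,\PP[R_1>(1-\epsilon)b(t)]+\int_1^\infty t\,\PP\bigl[d^*\bigl((X_1,Y_1),\mathbb{C}_{a,b}\bigr)>b_0(t)v\bigr]\,\dd v\\
&\longrightarrow(1-\epsilon)^{-\alpha}+(\alpha_0-1)^{-1}<\infty ,
\end{align*}
the finiteness at $v=\infty$ being precisely where $\alpha_0>1$ enters, together with $\PP[d^*\bigl((X_1,Y_1),\mathbb{C}_{a,b}\bigr)>\,\cdot\,]\in\mathrm{RV}_{-\alpha_0}$ with scaling $b_0$ (the HRV regime produced by 2RV) and Potter bounds to pass the $\dd v$-integral to the limit. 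Since $\overline J_n\ge0$, Markov's inequality gives tightness, so $J_n=O_p(1)$ and $\sqrt{k_n}Q_n\convp0$; in particular $\sqrt{k_n}(B_n-\theta_0)\convp0$, $B_n\convp\theta_0$, and $\sqrt{k_n}(A_n-\theta_0H_{k_n,n})\convp0$.

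\emph{Assembling the limit and the obstacle.} Since $b_0=b\circ U^\leftarrow$ with $U(t)=t/A(t)$, a regular-variation computation gives $b_0(t)/b(t)\in\mathrm{RV}_{-\rho/(\alpha(1+\rho))}$, which, because $\alpha_0=\alpha(1+\rho)>1$, decays strictly more slowly than $A(t)\in\mathrm{RV}_{-\rho}$; hence \eqref{eq:cond1} forces the usual Hill bias condition $\sqrt{k_n}\,A(n/k_n)\to0$. Under it, and since the law of $R_1$ is one-dimensionally 2RV (the marginalised form of \eqref{e:2rv}), the classical Hill limit theorem gives $\sqrt{k_n}(H_{k_n,n}-1/\alpha)\Rightarrow\frac1\alpha\bigl(\int_0^1W(s)\,s^{-1}\,\dd s-W(1)\bigr)$ (see \cite[\S9.1]{resnickbook:2007} or \cite{dehaan:ferreira:2006}). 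Combining via
\[
\sqrt{k_n}\Bigl(T_n-\frac1\alpha\Bigr)=\frac1{B_n}\Bigl[\sqrt{k_n}(A_n-\theta_0H_{k_n,n})+\theta_0\sqrt{k_n}\Bigl(H_{k_n,n}-\frac1\alpha\Bigr)-\frac1\alpha\sqrt{k_n}(B_n-\theta_0)\Bigr]
\]
and Slutsky's theorem establishes \eqref{e:fullvar}. Finally $\frac1\alpha\bigl(\int_0^1W(s)\,s^{-1}\,\dd s-W(1)\bigr)$ is a centred Gaussian variable with variance $\frac1{\alpha^2}\bigl(\mathrm{Var}\!\int_0^1W(s)\,s^{-1}\dd s-2\,\mathrm{Cov}(\int_0^1W(s)\,s^{-1}\dd s,\,W(1))+1\bigr)=\frac1{\alpha^2}(2-2+1)=\frac1{\alpha^2}$, so it equals $\frac1\alpha W(1)$ in distribution and is $N(0,1/\alpha^2)$. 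I expect the main obstacle to be the tightness of $J_n$, i.e.\ controlling the distances $d^*\bigl((X^*_i,Y^*_i),\mathbb{C}_{a,b}\bigr)$ over the $k_n$ largest $R$-order statistics: this is exactly where the hidden regular variation with the lighter scaling $b_0$ and the moment restriction $\alpha_0>1$ do the real work, and making the joint tail behaviour of $\bigl(d^*((X_1,Y_1),\mathbb{C}_{a,b}),R_1\bigr)$ precise is the part deferred to the supplement.
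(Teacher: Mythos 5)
Your argument is correct, and it takes a genuinely different route from the paper's. The paper's proof (Section 6.2 of the supplement) establishes a functional central limit theorem in $D(0,\infty)$ for the $\Theta$-weighted tail empirical process $k_n^{-1}\sum_i\Theta_i\epsilon_{R_i/b(n/k_n)}(\cdot)$, applies composition with $R_{(k_n)}/b(n/k_n)$, Vervaat inversion, and the integral map $x\mapsto\int_1^\infty x(s)s^{-1}\dd s$, and finally handles the denominator $k_n^{-1}\sum_i\Theta_i^*$ separately; the weighted process CLT itself depends on showing that the term involving $\Theta_i-\theta_0$ vanishes after normalization, which reuses the HRV-driven estimate \eqref{e:conv0}. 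You instead sidestep the process-level machinery entirely: the pointwise identity $\vert\Theta_i-\theta_0\vert=\theta_0\,d^*\bigl((X_i,Y_i),\mathbb{C}_{a,b}\bigr)/R_i$ and the elementary bound $\sup_{x\ge1}x^{-1}\log x=e^{-1}$ reduce both numerator and denominator discrepancies to the single quantity $Q_n$, whose negligibility after $\sqrt{k_n}$-scaling follows from the same HRV tightness argument ($J_n=O_p(1)$ via the $\alpha_0>1$ moment bound, structurally parallel to the Karamata step in the paper's proof of \eqref{e:conv0}). You then quote the classical Hill CLT and close by Slutsky. What the paper's route buys is a framework that carries over unchanged to Theorem~\ref{thm:strong_full_HA}, where $S$ is not a point mass and $\Theta_i^*$ does not concentrate, so the direct comparison with $H_{k_n,n}$ would break down. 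What your route buys is a shorter, more transparent proof for the full-dependence case, exposing the key structural fact that under $H_0^{(2)}$ the statistic $T_n$ is within $o_p(k_n^{-1/2})$ of the Hill estimator. Your variance computation for $\int_0^1 W(s)s^{-1}\dd s-W(1)$ and your observation that \eqref{eq:cond1} forces $\sqrt{k_n}A(n/k_n)\to0$ via the regular-variation comparison $b_0/b\in RV_{-\rho/(\alpha(1+\rho))}$ with $\rho/(\alpha(1+\rho))<\rho$ when $\alpha_0>1$ are both correct and match assertions the paper states without spelling out.
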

%\twang{We leave the proof of Theorem~\ref{thm:strong_full} to the supplement.}

\subsection{Strong dependence}\label{sub:strong}
Theorem~\ref{thm:strong_full_HA} suggests 
identifying strong dependence in $H^{(2)}_a$ if the asymptotic variance of
$T_n$ is bigger than $1/\alpha^2$.

\begin{Theorem}\label{thm:strong_full_HA}
Consider the hypothesis test in \eqref{eq:test_fullstrong} \sid{with the 
  assumption $H_a^{(2)}$, that is, $S([a,b])=1.$}
Suppose the 2RV condition in \eqref{e:2RVmeas} holds with
a limiting signed
measure $\chi (\cdot)$ and \sid{ $A(t) \in
RV_{-\rho}$, $\rho>0$. Define $b_0(t) $  as in \eqref{e:defb0}},
so $b_0(t) \in RV_{1/(\alpha(1+\rho))}$ and $\alpha_0=\alpha(1+\rho)$.
 As before, $\{k_n\}$ is an intermediate sequence satisfying \eqref{eq:cond1}.
 Define 
\begin{align*}
\mu &:= \int_a^b xS(\dd x),\qquad
\sigma^2 := \int_a^b (x-\mu)^2 S(\dd x),
\end{align*}
and under strong dependence assumption $H^{(2)}_a$, we have
\begin{align}
\sqrt{k_n}\left(T_n - \frac{1}{\alpha}\right)
&\Rightarrow \frac{(1+\sigma^2/\mu^2)^{1/2}}{\alpha}\left(\int_0^1\frac{W(s)}{s}\dd s - W(1)\right)\nonumber
  \\
          &  \stackrel{d}{=}\frac{(1+\sigma^2/\mu^2)^{1/2}}{\alpha} W(1)
          \sim N\bigl(0,
            \frac{1}{\alpha^2}(1+\sigma^2/\mu^2)\bigr) .
            \label{eq:claimstrong}
\end{align}
\end{Theorem}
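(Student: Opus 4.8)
\medskip
\noindent\textbf{Proof plan for Theorem~\ref{thm:strong_full_HA}.}

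\emph{Reduction to a ratio.} The plan is to write $T_n=p_n/q_n$ with $p_n:=k_n^{-1}\sum_{i=1}^{k_n}\Theta_i^*\log(R_{(i)}/R_{(k_n)})$ and $q_n:=k_n^{-1}\sum_{i=1}^{k_n}\Theta_i^*$. Since $\Theta\in[a,b]$ $\,S$--a.e.\ with $a>0$, a law of large numbers for the tail empirical measure (needing only multivariate regular variation) gives $q_n\convp\mu>0$, so by Slutsky I would deduce \eqref{eq:claimstrong} from the joint weak limit of $\sqrt{k_n}(p_n-\mu/\alpha)$ and $\sqrt{k_n}(q_n-\mu)$ via the identity
\[
\sqrt{k_n}\Bigl(T_n-\tfrac1\alpha\Bigr)=\frac1{q_n}\Bigl[\sqrt{k_n}\bigl(p_n-\tfrac\mu\alpha\bigr)-\tfrac1\alpha\,\sqrt{k_n}(q_n-\mu)\Bigr].
\]

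\emph{The two--component tail empirical field.} The engine is a bivariate tail--empirical CLT. Put $\hat b:=b(n/k_n)$ and $\nu_n:=k_n^{-1}\sum_{i=1}^{n}\epsilon_{(R_i/\hat b,\,\Theta_i)}$ on $(0,\infty)\times[0,1]$. Under the 2RV hypothesis \eqref{e:2RVmeas} together with \eqref{eq:cond1}, the classical machinery gives $\sqrt{k_n}(\nu_n-\nu_\alpha\times S)\Rightarrow G$, a centred Gaussian field with independent increments and $\mathrm{Var}\,G(\cdot)=(\nu_\alpha\times S)(\cdot)$. A Fubini computation expresses the ingredients of $T_n$ as functionals of $\nu_n$: with $\phi_n(x):=\nu_n((x,\infty)\times[0,1])$ and $\psi_n(x):=\int\theta\,\ind[r>x]\,\nu_n(\dd r,\dd\theta)$ one has $q_n=\psi_n(U_n)$ and $p_n=\int_{U_n}^{\infty}y^{-1}\psi_n(y)\,\dd y$, where $U_n:=R_{(k_n)}/\hat b=\phi_n^{\leftarrow}(1)\convp1$. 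Writing $W_1(x):=G((x,\infty)\times[0,1])$ and $W_2(x):=\int\theta\,\ind[r>x]\,G(\dd r,\dd\theta)$, one gets jointly as processes $\sqrt{k_n}(\phi_n(x)-x^{-\alpha})\Rightarrow W_1(x)$ and $\sqrt{k_n}(\psi_n(x)-\mu x^{-\alpha})\Rightarrow W_2(x)$, with $\mathrm{Cov}(W_1(x),W_1(y))=(x\vee y)^{-\alpha}$, $\mathrm{Cov}(W_1(x),W_2(y))=\mu(x\vee y)^{-\alpha}$ and $\mathrm{Cov}(W_2(x),W_2(y))=(\mu^2+\sigma^2)(x\vee y)^{-\alpha}$, the last because $\int\theta^2S(\dd\theta)=\mu^2+\sigma^2$; equivalently $W_1(x)=B(x^{-\alpha})$ and $W_2(x)=\mu B(x^{-\alpha})+\sigma\widetilde B(x^{-\alpha})$ in law for independent standard Brownian motions $B,\widetilde B$.

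\emph{Linearisation, cancellation and change of variables.} From $\phi_n(U_n)=1$, the expansion $\phi_n(x)\approx x^{-\alpha}+k_n^{-1/2}W_1(x)$ and $(x^{-\alpha})'|_{x=1}=-\alpha$ give $\sqrt{k_n}(U_n-1)\Rightarrow\alpha^{-1}W_1(1)$, whence a continuous--mapping/delta argument yields $\sqrt{k_n}(q_n-\mu)\Rightarrow W_2(1)-\mu W_1(1)$. For $p_n$, after subtracting $\int_1^{\infty}y^{-1}\mu y^{-\alpha}\dd y=\mu/\alpha$ and splitting at the moving endpoint,
\[
p_n-\tfrac\mu\alpha=\int_1^{\infty}y^{-1}\bigl(\psi_n(y)-\mu y^{-\alpha}\bigr)\dd y-\int_1^{U_n}y^{-1}\psi_n(y)\,\dd y,
\]
the first term converges (after a uniform tail bound legitimising the interchange of limit and integral) under $\sqrt{k_n}$ to $\int_1^{\infty}y^{-1}W_2(y)\,\dd y$ and the second is asymptotically $\mu(U_n-1)$, so $\sqrt{k_n}(p_n-\mu/\alpha)\Rightarrow\int_1^{\infty}y^{-1}W_2(y)\,\dd y-(\mu/\alpha)W_1(1)$. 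Substituting into the identity of the first step, the $W_1(1)$ contributions cancel and $q_n\convp\mu$ gives $\sqrt{k_n}(T_n-1/\alpha)\Rightarrow\mu^{-1}\bigl(\int_1^{\infty}y^{-1}W_2(y)\,\dd y-\alpha^{-1}W_2(1)\bigr)$. The change of variables $s=y^{-\alpha}$, using that $y\mapsto W_2(y)$ has the law of $(\mu^2+\sigma^2)^{1/2}W(y^{-\alpha})$ for a standard Brownian motion $W$, turns this into $\tfrac{(1+\sigma^2/\mu^2)^{1/2}}{\alpha}\bigl(\int_0^1 W(s)\,\dd s/s-W(1)\bigr)$, and the distributional identity $\int_0^1 W(s)\,\dd s/s-W(1)\stackrel{d}{=}W(1)\sim N(0,1)$ (already used in Theorem~\ref{thm:strong_full}, verified by a short covariance computation) gives \eqref{eq:claimstrong}. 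As a sanity check, taking $S=\epsilon_{\theta_0}$ forces $\sigma^2=0$ and $T_n=H_{k_n,n}$, recovering the Hill limit \eqref{e:fullvar} of Theorem~\ref{thm:strong_full}.

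\emph{Main obstacle.} The substantive work is all in the second and third steps: establishing the joint functional CLT for $\nu_n$ on the \emph{unbounded} half--line, i.e.\ tightness of $\psi_n$ plus a uniform tail estimate controlling $\psi_n(y)$ as $y\to\infty$ so that $\sqrt{k_n}\int_1^{\infty}y^{-1}(\psi_n-\mu y^{-\alpha})\dd y$ converges to the stated stochastic integral, together with the handling of the random inner endpoint $U_n$. One must also check that \eqref{eq:cond1} annihilates at the $\sqrt{k_n}$ scale both the deterministic 2RV bias (of order $\sqrt{k_n}A(n/k_n)$; see \eqref{e:needE}) and the contribution of the $O(k_nA(n/k_n))$ top--order points whose angle falls outside $[a,b]$, i.e.\ the hidden--regular--variation regime governed by $\chi$; here $b_0\in RV_{1/(\alpha(1+\rho))}$, $b\in RV_{1/\alpha}$ and $\alpha_0=\alpha(1+\rho)>1$ together give $A(n/k_n)=o\!\bigl(b_0(n/k_n)/b(n/k_n)\bigr)$, so \eqref{eq:cond1} suffices. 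Everything else — the Brownian change of variables and the cancellation of the $W_1(1)$ terms — is elementary bookkeeping of the type encountered in the Hill--estimator CLT.
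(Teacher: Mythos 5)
Your plan is correct and reaches the same limit, but it takes a genuinely different route from the paper in the final step. The paper never splits $T_n$ into numerator and denominator processes with a coupled bivariate Gaussian limit; instead it works with the single $\Theta$-weighted process $\eta_n(t)=\frac{1}{\mu k_n}\sum_i\Theta_i\epsilon_{R_i/b(n/k_n)}((\cdot)^{-1/\alpha},\infty)$, normalizes by the factor $(1+\sigma^2/\mu^2)^{1/2}$ so its limit is a standard Brownian motion, applies the integral map, and then invokes Vervaat's inversion lemma. The payoff of the Vervaat step is that $\eta_n^{\leftarrow}$, evaluated at the random argument $t=\frac{1}{\mu k_n}\sum_{i\le k_n}\Theta_i^*$, equals $(R_{(k_n)}/b(n/k_n))^{-\alpha}$ by construction, so the joint convergence of $\bigl((R_{(k_n)}/b(n/k_n))^{-\alpha},\ \frac{1}{\mu k_n}\sum\Theta_i^*\bigr)$ drops out in one stroke, yielding the $-\,(1+\sigma^2/\mu^2)^{1/2}W(1)$ term directly. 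You accomplish the same thing by tracking the pair $(W_1,W_2)$ — the unweighted and $\Theta$-weighted limit fields with $\mathrm{Cov}(W_2(x),W_2(y))=(\mu^2+\sigma^2)(x\vee y)^{-\alpha}$ and $\mathrm{Cov}(W_1(x),W_2(y))=\mu(x\vee y)^{-\alpha}$ — linearizing at the random endpoint $U_n$ (which is a delta-method/Vervaat argument on $\phi_n$), and observing the $W_1(1)$ contributions cancel. Your route makes the cancellation visible and exposes the covariance bookkeeping cleanly; the paper's Vervaat trick is more compact and avoids having to set up a bivariate Gaussian field. Both approaches share the same FCLT engine (Pollard's Theorem 10.6 for the weighted tail empirical process under 2RV), the same integral map $x\mapsto\int_1^\infty x(s)\,\dd s/s$ with the same tail-integrability check, and your observation that \eqref{eq:cond1} forces $\sqrt{k_n}A(n/k_n)\to 0$ (using $\alpha_0=\alpha(1+\rho)>1$) matches the bias control in the paper's supplement; your sanity check $S=\epsilon_{\theta_0}\Rightarrow\sigma^2=0$ correctly recovers Theorem~\ref{thm:strong_full}.
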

%%%%%%%%%%%%%%%%%%%%%%%%%%%%%%%%%%%%
The proof of Theorem~\ref{thm:strong_full_HA} requires a
functional central limit theorem for row sums of a triangular array of
$\mathbb{D}[0,1]$-functions (\cite[Theorem 10.6]{pollard:1984}) that
generalizes the sequential result of \cite{hahn:1978}.
We give the formal proof for results in Theorem~\ref{thm:strong_full_HA} since 
it showcases the key proof steps for Theorem~\ref{thm:strong_full} as well.
\begin{proof}
Proceed by steps.\\
%\begin{enumerate}
(1) First, employ the functional central limit theorem given in Theorem~4.1 of the supplement to show that
in $D(0,1]$,
\begin{align}
\label{eq:strong_step1}
\frac{\sqrt{k_n}}{(1+\sigma^2/\mu^2)^{1/2}}\left(\frac{1}{\mu k_n}\sum_{i=1}^n \Theta_i \epsilon_{R_i/b(n/k_n)}(t^{-1/\alpha},\infty)-t\right)
\Rightarrow W(t),
\end{align}
where $W(\cdot)$ is a standard Brownian motion.

We check all conditions in Theorem~4.1 of the supplement (details deferred to Section~4 of the supplement), and draw the conclusion that in $D(0,1]$,
\begin{align}
&\frac{(\mu^2+\sigma^2)^{-1/2}}{\sqrt{k_n}}\sum_{i=1}^n \left(\Theta_i \epsilon_{\frac{R_i}{b(n/k_n)}}(t^{-1/\alpha},\infty)-\EE\bigl(\Theta_i \epsilon_{\frac{R_i}{b(n/k_n)}}(t^{-1/\alpha},\infty)\bigr)\right)\nonumber\\
&\Rightarrow W(t).\label{eq:Xt}
\end{align}
Note that by 2RV using \eqref{e:expVer} or \eqref{e:expVerMeas}
plus the marginalized version for the distribution of $R_1$, we have
\begin{align}\label{eq:Theta2RV}
\sqrt{k_n}\left(\frac{n}{k_n}\EE\left(\Theta_1\ind_{\{R_1>b(n/k_n)y\}}\right)-\mu \frac{n}{k_n}\PP(R_1>b(n/k_n)y)\right)\to 0
\end{align}
locally uniformly for $y> 0$ and for $\{k_n\}$ satisfying
\eqref{eq:cond1}. Combining \eqref{eq:Theta2RV} with \eqref{eq:Xt} then completes the proof of \eqref{eq:strong_step1}.\\
(2)
Applying
the 
composition map $(x(t),c)\mapsto x(ct)$ from
{$D(0,\infty)\times (0,\infty)\mapsto D(0,\infty)$}, we get in $D(0,\infty)$,
\begin{align}\label{eq:conv_thetaR}
\frac{\sqrt{k_n}}{(1+\sigma^2/\mu^2)^{1/2}}\left(\frac{1}{\mu k_n}\sum_{i=1}^n \Theta_i\epsilon_{R_i/R_{(k_n)}}(y,\infty) 
- \left(y\frac{R_{(k_n)}}{b(n/k_n)} \right)^{-\alpha} \right)
\Rightarrow  W(y^{-\alpha}).
\end{align}
 Repeating a similar argument as in Step 2 of the proof for Theorem~\ref{thm:strong_full} (details included in Section 3 of the supplement), we are able to justify the application of
\[
x\mapsto \int_1^\infty \frac{x(s)}{s}\dd s,
\]
which further leads to 
\begin{align}
(\mu^2+\sigma^2)^{-1/2}\sqrt{k_n}&\left(\frac{1}{\mu k_n}\sum_{i=1}^{k_n}\Theta_i^*\log\frac{R_{(i)}}{R_{(k_n)}}-\frac{1}{\alpha}\left(\frac{R_{(k_n)}}{b(n/k_n)}\right)^{-\alpha}\right)\nonumber\\
&\Rightarrow \frac{1}{\alpha}\int_0^1 \frac{W(s)}{s}\dd s .\label{eq:conv_thetaLogR}
\end{align}
(3)
We are left to justify the convergence of 
\begin{align*}
\sqrt{k_n}(T_n-1/\alpha)& - \sqrt{k_n}\left(\frac{1}{\mu k_n}\sum_{i=1}^{k_n}\Theta_i^*\log\frac{R_{(i)}}{R_{(k_n)}}-\frac{1}{\alpha}\left(\frac{R_{(k_n)}}{b(n/k_n)}\right)^{-\alpha}\right)\\
&= \sqrt{k_n}\left(\left(\frac{1}{k_n}\sum_{i=1}^{k_n}\Theta_i^*\right)^{-1}-\frac{1}{\mu}\right)\frac{1}{k_n}\sum_{i=1}^{k_n}\Theta_i^*\log\frac{R_{(i)}}{R_{(k_n)}}\\
&+ \frac{\sqrt{k_n}}{\alpha}\left(\left(\frac{R_{(k_n)}}{b(n/k_n)}\right)^{-\alpha}-1\right).
\end{align*}
Note that by \eqref{eq:conv_thetaR},
\begin{align*}
\frac{1}{k_n}\sum_{i=1}^{k_n} \Theta_i^* = \frac{1}{k_n}\sum_{i=1}^n \Theta_i\epsilon_{R_i/R_{(k_n)}}(1,\infty) 
\convp \mu,
\end{align*}
and the convergence in \eqref{eq:conv_thetaLogR} gives
\begin{align*}
\frac{1}{k_n}\sum_{i=1}^{k_n}\Theta_i^*\log\frac{R_{(i)}}{R_{(k_n)}}\convp \frac{\mu}{\alpha}.
\end{align*}
Therefore, it suffices to consider the convergence of
\begin{align}\label{eq:RTheta_bar}
\frac{\sqrt{k_n}}{\alpha}\left(\left(\frac{R_{(k_n)}}{b(n/k_n)}\right)^{-\alpha}-\frac{1}{\mu k_n}\sum_{i=1}^{k_n}\Theta_i^*\right).
\end{align}

To prove the convergence of \eqref{eq:RTheta_bar},
we first use Vervaat's inversion
(\cite{vervaat:1972b,dehaan:ferreira:2006,resnickbook:2007})  
to obtain the convergence of the inverse of 
\[
\eta_n(\cdot) := \frac{1}{\mu k_n}\sum_{i=1}^n\Theta_i\epsilon_{R_i/b(n/k_n)}\bigl((\cdot)^{-1/\alpha},\infty\bigr).
\]
Note that
\begin{align}
\eta_n^\leftarrow(t) &= \inf\left\{s: \frac{1}{k_n}\sum_{i=1}^n\Theta_i\epsilon_{R_i/b(n/k_n)}\bigl(s^{-1/\alpha},\infty\bigr)\ge \mu t\right\}\nonumber\\
&= \left(\sup\left\{y: \frac{1}{k_n}\sum_{i=1}^n\Theta_i\epsilon_{R_i/b(n/k_n)}\bigl(y,\infty\bigr)\ge \mu t\right\}\right)^{-\alpha}.\label{eq:nu_inverse}
\end{align}
Then with 
\[
M_n(t):=\inf\left\{m\ge 1: \frac{1}{k_n}\sum_{i=1}^n\Theta_i\epsilon_{R_i/b(n/k_n)}\left(\frac{R_{(m)}}{b(n/k_n)},\infty\right)\ge \mu t\right\},
\]
the inverse function in \eqref{eq:nu_inverse} becomes
\begin{align*}
\eta_n^\leftarrow(t) = \left(\frac{R_{(M_n(t))}}{b(n/k_n)}\right)^{-\alpha}.
\end{align*}
Applying Vervaat's lemma
(\cite{vervaat:1972b,dehaan:ferreira:2006,resnickbook:2007}) 
  gives the joint convergence in $D(0,1]\times D(0,1]$:
\begin{align}\label{eq:joint_conv}
\frac{\sqrt{k_n}}{(1+\sigma^2/\mu^2)^{1/2}}\left(\eta_n(t)-t,\,\eta^\leftarrow_n(t)-t\right)
\Rightarrow \left(W(t), -W(t)\right).
\end{align}
Since for $t=\frac{1}{\mu k_n}\sum_{i=1}^{k_n}\Theta_i^*$, 
\begin{align*}
&M_n\left(\frac{1}{\mu k_n}\sum_{i=1}^{k_n}\Theta_i^*\right) \\
&= \inf\left\{m\ge 1: \frac{1}{k_n}\sum_{i=1}^n\Theta_i\epsilon_{R_i/b(n/k_n)}\left(\frac{R_{(m)}}{b(n/k_n)},\infty\right)\ge \frac{1}{k_n}\sum_{i=1}^{k_n}\Theta_i^*\right\}
= k_n,
\end{align*}
then \eqref{eq:joint_conv} gives
\begin{align}\label{eq:conv_inversion}
\sqrt{k_n}\left(\left(\frac{R_{(k_n)}}{b(n/k_n)}\right)^{-\alpha}-\frac{1}{\mu k_n}\sum_{i=1}^{k_n}\Theta_i^*\right)\Rightarrow -(1+\sigma^2/\mu^2)^{1/2}W(1).
\end{align}
Combining \eqref{eq:conv_inversion} with \eqref{eq:conv_thetaLogR} shows that
\[
\sqrt{k_n}\left(T_n-1/\alpha\right)\Rightarrow \frac{(1+\sigma^2/\mu^2)^{1/2}}{\alpha}\left(\int_0^1\frac{W(s)}{s}\dd s - W(1)\right),
\]
thus verifying \eqref{eq:claimstrong}.
%\end{enumerate}
\end{proof}

\section{Implementation of Testing}\label{sec:boot}
Applying the test statistics to data requires estimating
a minimal length interval $[a,b]$ containing
the support of the angular measure. 
On the one hand, choosing an unnecessarily  wide
interval $[a,b]$ leads $D_n^*$ to conclude $S([a,b])=1$ but only shows the support is a subset of
$[a,b]$.
{Also making $[a,b]$ too wide may mean there are few points in $[0,1]\setminus [a,b]$, so that even if
the true support of $S$ is $[0,1]$, we could falsely accept the
existence of strong dependence.} 
On the other hand, fixing an excessively narrow interval $[a,b]$ may
lead to $D_n^*$ inaccurately
rejecting
existence of strong dependence.

\sid{We begin with a method for estimating $a,b$ and then proceed to
bootstrap methods for implementing the tests. This is followed in
Sections \ref{subsec:sim} and \ref{sec:GetReal} by illustrations using
simulated and real data.}

\subsection{Methodology}
\subsubsection{Estimating $[a,b]$}\label{subsub:estimate}

\sid{We estimate $a,b$ as the minimizer of an objective function
$g_n(a,b)$ subject to the constraint $0\le a\le b\le 1$ where}
\begin{align}
g_n(a,b):= (b-a) + \sqrt{k_n} \left\vert D_n^* - H_{k_n,n}\right\vert\label{eq:opt}
\end{align}
The first part of the objective function, $b-a$, favors a narrow
interval $[a,b]$ the second part requires  a wide
enough interval $[a,b]$ so that
$\left\vert D_n^* - H_{k_n,n}\right\vert\approx 0$.
Hence, by minimizing $g_n$, we obtain an estimated interval
$[\hat{a},\hat{b}]$ of reasonable length and satisfying $\left\vert
  D_n^* - H_{k_n,n}\right\vert\approx 0$. In practice, the {\tt constrOptim}
function in R suffices for the minimization.

Theorem~\ref{thm:consistency} gives the consistency of $\hat{a}$ and $\hat{b}$ for $\alpha>1$.

\begin{Theorem}\label{thm:consistency}
 Suppose the support of $S$ is $[a,b]$,    $\alpha>1$ and the intermediate sequence $\{k_n\}$ satisfies
 \eqref{eq:cond1}.
 Let $\hat{a}$ and $\hat{b}$ be the minimizer of
\eqref{eq:opt}. Then,
\[
\hat{a}\convp a,\qquad \hat{b}\convp b,
\]
as $n\to\infty$.
\end{Theorem}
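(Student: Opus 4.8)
The plan is to show that $g_n(\hat a,\hat b)\to 0$ in probability, which forces both $\hat b-\hat a$ and $\sqrt{k_n}|D_n^*-H_{k_n,n}|$ (evaluated at the minimizer) to $0$, and then to argue that the first of these, together with a lower semicontinuity / identifiability property of the limiting objective, pins the minimizer down to the true interval $[a,b]$. Concretely, I would first establish an upper bound on the minimum: since the support of $S$ is the \emph{true} interval $[a,b]$, plugging $(a,b)$ into $g_n$ gives $g_n(a,b) = (b-a) + \sqrt{k_n}|D_n^*(a,b) - H_{k_n,n}|$, and by remark (2) after Theorem~\ref{thm:test_strong} (equation \eqref{e:small}), $\sqrt{k_n}(D_n^*(a,b) - H_{k_n,n}) \Rightarrow 0$ under $H_0^{(1)}$, i.e. when $S([a,b])=1$. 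Hence $g_n(a,b) \convp (b-a)$, so with probability tending to one, $g_n(\hat a,\hat b) \le g_n(a,b) \le (b-a) + o_p(1)$.

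Next I would show the reverse: any candidate interval $[a',b']$ with $[a,b]\not\subseteq [a',b']$ (more precisely, with $S([a',b']) < 1$) must have $\sqrt{k_n}|D_n^*(a',b') - H_{k_n,n}|$ blowing up. This is the heart of the argument and the step I expect to be the main obstacle. The point is that under $H_a^{(1)}$ (i.e. $S([a',b'])<1$), the extra term $\frac1{k_n}\sum_{i=1}^{k_n}\frac{d^*((X_i^*,Y_i^*),\mathbb C_{a',b'})}{R_{(k_n)}}\log\frac{R_{(i)}}{R_{(k_n)}}$ does \emph{not} vanish after multiplication by $\sqrt{k_n}$: because the angular measure puts positive mass outside $[a',b']$, a positive fraction of the top-$k_n$ points have $d^*(\cdot,\mathbb C_{a',b'})$ of the same order as $R_{(k_n)}$, so the sum converges in probability to a strictly positive constant $c(a',b')>0$ (this can be read off from the tail empirical measure / the limit angular measure $S$ restricted to $[0,1]\setminus[a',b']$, using $\alpha>1$ so the relevant moment of $\Theta$ against $\log$ is finite). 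Consequently $\sqrt{k_n}|D_n^*(a',b')-H_{k_n,n}|\convp\infty$, so $g_n(a',b')\convp\infty$. One then needs this divergence to be uniform over the compact region of $(a',b')$ bounded away from containing $[a,b]$ — a standard but slightly delicate equicontinuity/monotonicity argument, exploiting that $d^*((x,y),\mathbb C_{a',b'})$ is monotone in the endpoints and jointly continuous, so $c(a',b')$ is continuous and strictly positive on that region.

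Combining the two bounds: with probability tending to one the minimizer $(\hat a,\hat b)$ lies in the region where $[a,b]\subseteq[\hat a,\hat b]$ and $\hat b - \hat a \le (b-a) + o_p(1)$. Since $[a,b]\subseteq [\hat a,\hat b]$ forces $\hat b - \hat a \ge b-a$, we get $\hat b - \hat a \to b - a$ while $\hat a \le a$ and $\hat b \ge b$; squeezing these gives $\hat a \convp a$ and $\hat b \convp b$. A clean way to package the last step is to note that $(\hat a, b-a+\hat a \wedge$ type comparisons force $\hat a \uparrow a$, $\hat b \downarrow b$ in probability; alternatively, for any $\varepsilon>0$, on the event $\{\hat a < a - \varepsilon\}$ one has either $S([\hat a,\hat b])<1$ (contradicting the lower bound, since then $g_n(\hat a,\hat b)\to\infty$) or $\hat b - \hat a > (b-a)+\varepsilon$ (contradicting the upper bound), so $\PP(\hat a < a-\varepsilon)\to 0$, and symmetrically for $\hat b$. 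Throughout, the hypothesis $\alpha>1$ and the intermediate-sequence condition \eqref{eq:cond1} are used exactly as in Theorem~\ref{thm:test_strong} to guarantee the asymptotic normality of $H_{k_n,n}$ and the vanishing \eqref{e:small} at the true $[a,b]$; the delicate part remaining is the uniform-in-$(a',b')$ control of the tail empirical sum, which I would handle with a bracketing argument over a fine grid of endpoints combined with the monotonicity of $d^*$ in the endpoints.
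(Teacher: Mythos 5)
Your proposal is essentially the same argument as the paper's: you show the objective at the true $(a,b)$ stays bounded (the correction term vanishes via \eqref{e:small}), you show that for intervals $[a',b']$ with $S([a',b'])<1$ the $k_n^{-1}$-scaled correction term converges to a strictly positive limit (the paper computes it as $L(a,s)=\alpha^{-1}\int_a^s(1-\theta/s)\,S(d\theta)$), so after restoring the $\sqrt{k_n}$ factor the objective diverges, and you handle uniformity via monotonicity of $d^*$ in the endpoints. The only real difference is cosmetic: the paper reduces notation by treating the one-parameter problem with $b=1$ assumed known and observes that the resulting $g_n(s)$ is concave, whereas you keep both endpoints free and invoke bracketing over a grid together with monotonicity; both give the needed uniform control.
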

In fact, the consistency result in Theorem~\ref{thm:consistency} also holds if 
we redefine
\[
g_n (s,t) = \sid{(t-s)} + \lambda\sqrt{k_n} \left\vert D_n^* - H_{k_n,n}\right\vert,
\]
for some $\lambda>0$. In Section~\ref{subsec:sim}, we exemplify the
value of this flexibility.

\begin{proof}
  To shorten the proof and ease notation we make the simplifying
  assumption that we know $b=1$. Define,
  \begin{align*}
&\mathbb{C}_{s}=\mathbb{C}_{s,1} = \{(x,y)\in R_+^2: s\le x/(x+y)\},\\
&d^*_{s}\bigl((x,y)\bigr)=d^*_{s,1}\bigl((x,y)\bigr) = d^*\bigl((x,y), \mathbb{C}_{s}\bigr)
    =\left\{y-(s^{-1}-1)x\right\}^+,\qquad 0\le s\le  1.\\
&g_n(s)=g_n(s,1)       =(1-s) + \frac{1}{\sqrt{k_n}}\sum_{i=1}^{k_n}\frac{d^*_{s}\bigl((X_i^*,Y_i^*)\bigr)}{R_{(k_n)}}\log\frac{R_{(i)}}{R_{(k_n)}}.
\end{align*}
Note that $g_n(\cdot)$ is concave in $s$.
We prove Theorem~\ref{thm:consistency} by showing that for some
$\epsilon>0$, 
and $\mathcal{I}_\epsilon := [0,a-\epsilon]\cup
[a+\epsilon,1],$
\begin{align}\label{eq:gn_diff}
\PP\left(\inf_{s \in \mathcal{I}_\epsilon}
  (g_n(s)-g_n(a))>2\epsilon\right)\to 1, \quad (n\to\infty).
\end{align}

Since $d^*_{s}\bigl((x,y)\bigr)$ is increasing in $s$,
Theorem~\ref{thm:test_strong} ensures that {(see \eqref{e:small})} 
\[
\sup_{s\in [0,a] } \frac{1}{\sqrt{k_n}}\sum_{i=1}^{k_n}
\frac{d^*_{s,t}\bigl((X_i^*,Y_i^*)\bigr)}{R_{(k_n)}}\log\frac{R_{(i)}}{R_{(k_n)}}
\convp 0, 
\]
and therefore,
\begin{align}
\label{eq:consistent_b1}
  \lim_{n\to\infty}
  \PP\left(\inf_{s\leq a-\epsilon}(g_n(s)-g_n(a))>2\epsilon\right) =1.
\end{align}

If $s\geq a+\epsilon,$ replace division by $\sqrt{k_n}$ with division
by $k_n$ and 
the definitions of $(R_i, \Theta_i)$ give
\begin{align}
&\frac{1}{k_n}\sum_{i=1}^{k_n}\frac{d^*_{s}\bigl((X_i^*,Y_i^*)\bigr)}{R_{(k_n)}}\log\frac{R_{(i)}}{R_{(k_n)}}
= \frac{1}{k_n}\sum_{i=1}^{k_n}\left\{1-s^{-1}\Theta^*_i \right\}^+
                \frac{R_{(i)}}{R_{(k_n)}}\log\frac{R_{(i)}}{R_{(k_n)}},\nonumber
  \\
  \intertext{and since $R_{(i)}\geq R_{(k_n)}$, this is bounded
  below by}
  &\ge \frac{1}{k_n}\sum_{i=1}^{k_n}\left\{1-s^{-1}\Theta^*_i
    \right\}^+\log\frac{R_{(i)}}{R_{(k_n)}}. \label{eq:limit_dstar}
\end{align}
We show this converges in probability to a positive constant
$L(a,s)>0$ and thus
\begin{align}
\label{eq:consistent_b2}
\frac{1}{\sqrt{k_n}}&\sum_{i=1}^{k_n}\frac{d^*_{s}\bigl((X_i^*,Y_i^*)\bigr)}{R_{(k_n)}}\log\frac{R_{(i)}}{R_{(k_n)}}\convp \infty.
\end{align}
Combining \eqref{eq:consistent_b2} with \eqref{eq:consistent_b1} completes the proof of \eqref{eq:gn_diff}.

Returning to the expression in \eqref{eq:limit_dstar}, write it as
\begin{equation}\label{e:sharpsharp}
  \iint_{\{(r,\theta): r>1,\theta \in [0,1]\}}
  \Bigl(1-\frac{\theta}{s}\Bigr)^+ \log r \frac 1k \sum_{i=1}^{k_n}
  \epsilon_{(\Theta_i^*, R_{(i)} / R_{(k)} )  }(d\theta,dr).
\end{equation}
On $[0,1]\times (1,\infty)$,
$$
\frac 1k \sum_{i=1}^{k_n}
  \epsilon_{(\Theta_i^*, R_{(i)} / R_{(k)} )  }(d\theta,dr) \to
  S(d\theta)\nu_\alpha(dr)$$
  and the right side is a probability
  measure on $[0,1]\times (1,\infty)$, so using familiar weak
  convergence arguments in \eqref{e:sharpsharp} we get convergence to
  \begin{align*}
    L(a,s):=&   \iint_{\{(r,\theta): r>1,\theta \in [0,1]\}}
              \Bigl(1-\frac{\theta}{s}\Bigr)^+ \log r S(d\theta)\nu_\alpha(dr)\\
    \intertext{and after some Fubini justified manipulations this is}
    =&\frac{1}{\alpha} \int_a^s \Bigl(1-\frac{\theta}{s}\Bigr) S(d\theta).
  \end{align*}

Remember $s>a+\epsilon$ and we verify $L(a,s)$ is positive. If not,
$L(a,s)=0$ and $1-\theta/s=0 $ or $\theta=s$ for almost all (with
respect to $S(\cdot)$) $\theta
\in [a,,a+\epsilon]$ and this means $S[a,a+\epsilon]=0$, thus
contradicting  $a$ being  in the support of $S(\cdot)$. So $L(a,s)>0.$
\end{proof}

\subsubsection{Bootstrap methods}
Formulating tests based on either Theorem~\ref{thm:test_strong} or \ref{thm:strong_full} requires knowing
the values of $\alpha, a, b$, which, however, is unlikely to be true for real datasets. 
\twang{
Substitution methods suggest  replacing $\alpha$ with the
corresponding Hill estimator, $1/H_{k_n,n}$ and investigating the
effect on the limit distribution but this
will not work here due to  \eqref{e:small}.
In the sequel, we \sid{propose} bootstrap methods to implement the
proposed tests \sid{and try the approach on  simulated and real datasets.}
We do not \sid{formally justify} the bootstrap method--this is
left for the future--but the numerical experiments suggest its applicability.
}

Suppose we take $m_n\approx n/k_n$ so that $m_n/n\to 0$ \sid{and
  $m_n\to \infty$}. Let $\{I_1(n),\ldots,I_{m_n}(n)\}$ be iid discrete uniform random variables on $\{1,\ldots,n\}$,
independent from $\{(X_i,Y_i): i\ge 1\}$.
We construct \sid{a} bootstrap resample of size $m_n$ by
\[
(X_{I_j(n)},Y_{I_j(n)}), \qquad j=1,\ldots,m_n.
\]
Define $R^\text{boot}_{(i)}$ as the $i$-th largest order statistic among
$\{R_{I_j(n)}\equiv X_{I_j(n)}+Y_{I_j(n)}:1\le j\le m_n\}$, and let 
$(X^*_{I_i(n)},Y^*_{I_i(n)})$ be the pair of random variables such that $X^*_{I_i(n)}+Y^*_{I_i(n)}\equiv R^\text{boot}_{(i)}$.
\medskip

\noindent\textbf{(1) Test $H_0^{(1)}$.}
For the test in \eqref{eq:test_strong}, we
first solve \eqref{eq:opt} \sid{using the whole sample} to estimate the support of the angular measure, $[\hat{a},\hat{b}]$ from the sample.
Then we 
obtain $\mathbb{C}_{\hat{a},\hat{b}}:= \{(x,y)\in \RR_+^2: \hat{a}\le x/(x+y)\le \hat{b}\}$
and
\begin{align*}
\widehat{D}^*_{m_n}=&\frac{1}{{k_n}}\sum_{i=1}^{k_n}
                      \left(1+\frac{d^*\bigl((X^*_{I_j(n)},Y^*_{I_j(n)}),{\mathbb{C}}_{\hat{a},\hat{b} }\bigr)}{R^\text{boot}_{(k_{m_n})}}\right)\log\frac{R^\text{boot}_{(i)}}{R^\text{boot}_{(k_{m_n})}}.
\end{align*}
\twang{Conditioning \sid{on} the original sample, 
we \sid{presume} from Theorem~\ref{thm:test_strong} $\sqrt{k_{m_n}}\left(\widehat{D}^*_{m_n}-H_{k_n,n}\right)$ is approximately normal with mean 0 and variance $H_{k_n,n}^2$
for large $n$.
}
%the following approximation holds for $n$ large:
%\[
%\sqrt{k_{m_n}}\left(\widehat{D}^*_{m_n}-H_{k_n,n}\right)\approx N(0, H_{k_n,n}^2).
%\] 
Therefore,
we reject $H^{(1)}_0$ in \eqref{eq:test_strong} if 
\begin{align}
\label{eq:bootD}
\left\vert \widehat{D}^*_{m_n}-H_{k_n,n}\right\vert > 1.96 \frac{H_{k_n,n}}{\sqrt{k_{m_n}}}.
\end{align}
\sid{In practice we would generate $B$ bootstrap samples and reject if more
than 5\% satisfy \eqref{eq:bootD}.}

\noindent\textbf{(2) Full vs strong dependence.}
For the test in \eqref{eq:test_fullstrong},  generate $B$ bootstrap
resamples \sid{indexed by $t=1,\dots,B$.}
\sid{For each $t$, let $R^\text{boot}_{(i),t}$ be the $i$-largest
  order statistic in the $t$-th resample;
$\Theta^*_{i,t}$ is the corresponding concomitant.}
Compute the corresponding test statistics for each resample, 
\[
T_{m_n}^{(t)} = \frac{\sum_{i=1}^{k_{m_n} }\Theta_{i,t}^*\log\frac{R^\text{boot}_{(i),t}}{R^\text{boot}_{(k_{m_n}), t}}}{\sum_{i=1}^{k_{m_n}}\Theta_{i,t}^*},
\qquad  t=1,\ldots, B.
\]
Based on Theorem~\ref{thm:strong_full},
we presume under $H_0^{(2)}$ that conditional on the original sample, 
$\sqrt{k_{m_n}}\left(T_{m_n}^{(t)} - H_{k_n,n}\right)$
is approximately normal with mean 0 and variance $H_{k_n,n}^2$
for large $n$.
Using all $B$ resamples, we obtain the bootstrap estimate of the standard error of $T_n$:
\[
SE_\text{boot}(m_n) := \left(\frac{1}{B-1}\sum_{t=1}^B \left(T_{m_n}^{(t)}-\bar{T}_{m_n}\right)^2\right)^{1/2},
\]
where $\bar{T}_{m_n} = \frac{1}{B}\sum_{t=1}^B T_{m_n}^{(t)}$.
Then we reject $H^{(2)}_0$ in \eqref{eq:test_fullstrong} if 
\[
k_{m_n}\frac{SE^2_\text{boot}(m_n)}{H_{k_n,n}^2} > \chi^2_{.95, B-1}/(B-1),
\]
where $\chi^2_{.95, B-1}$ denotes the 95\% quantile of a chi-square distribution with $B-1$ degrees of freedom.

\noindent\textbf{(3) Strong vs weak dependence.}
When testing for strong vs weak dependence, we rely on Theorem~\ref{thm:strong_full_HA} and
define $\widetilde{\Theta}_i := \Theta_i \ind_{\{\Theta_i \in [a,b]\}}$, $\widetilde{R}_i := R_i \ind_{\{\Theta_i \in [a,b]\}}$.
Let $\widetilde{\Theta}_i^*$ be the concomitant of $\widetilde{R}_{(i)}$, and by assuming $0/0\equiv 1$ we define also 
\begin{align}
\label{eq:defTn_ab}
\widetilde{T}_n := \frac{\sum_{i=1}^{k_n}\widetilde{\Theta}_i^*\log\left(\frac{\widetilde{R}_{(i)}}{\widetilde{R}_{(k_n)}}\vee 1\right)}{\sum_{i=1}^{k_n}\widetilde{\Theta}_i^*}.
\end{align}
% to test whether 
%$T_n$ and $\widetilde{T}_n$ have the same asymptotic variance.
For $[a,b]\subsetneq [0,1]$, we want to test strong vs weak dependence, i.e.
\begin{align}
\label{eq:strong_weak}
H^{(3)}_0: \text{Support of $S(\cdot)=[a,b]$} \qquad \text{v.s.}\qquad H^{(3)}_a:\text{Support of $S(\cdot)=[0,1]$}.
\end{align}
Under $H^{(3)}_0$, $\widetilde{T}_n$ must have the same asymptotic distribution as $T_n$.
Here we apply the bootstrap method again to test whether $T_n$ and $\widetilde{T}_n$ have the same asymptotic variance.
Again estimate $[\hat{a},\hat{b}]$ from \eqref{eq:opt}. To obtain the $t$-th resample, we generate $\{I_{1,t}(n),\ldots,I_{m_n,t}(n)\}$ iid discrete uniform random variables on $\{1,\ldots,n\}$, and compute
\[
\widetilde{\Theta}_{i,t} := \Theta_{I_{i,t}(n)} \ind_{\{\Theta_{I_{i,t}(n)} \in [\hat{a},\hat{b}]\}}
,\qquad \widetilde{T}_{m_n}^{(t)} = \frac{\sum_{i=1}^{k_{m_n}}\widetilde{\Theta}_{i,t}^*\log\left(\frac{\widetilde{R}_{(i),t}}{\widetilde{R}_{(k_{m_n}),t}}\vee 1\right)}{\sum_{i=1}^{k_{m_n}}\widetilde{\Theta}_{i,t}^*}.
\]
We repeat the bootstrap resampling scheme twice to obtain $T_{m_n}^{(1)},\ldots, T_{m_n}^{(B)}$, $\widetilde{T}_{m_n}^{(1)},\ldots, \widetilde{T}_{m_n}^{(B)}$, and reject $H^{(3)}_0$ if
\[
\frac{\frac{1}{B-1}\sum_{t=1}^B \left(T_{m_n}^{(t)}-\bar{T}_{m_n}\right)^2}{\frac{1}{B-1}\sum_{s=1}^B \left(\widetilde{T}_{m_n}^{(t)}-\bar{\widetilde{T}}_{m_n}\right)^2}
> F_{0.975, B-1, B-1}\quad\text{or}\quad < F_{0.025, B-1, B-1},
\]
where $\bar{\widetilde{T}}_{m_n} = \frac{1}{B}\sum_{t=1}^B \widetilde{T}_{m_n}^{(t)}$ and 
$F_{p, B-1, B-1}$ denotes the $100p\%$-percentile of an $F$-distribution with numerator and denominator degrees of freedom both equal to $B-1$.

\subsection{Simulation study}\label{subsec:sim}
\textbf{Example 1.}
Consider a simulated data example as below. Set $a=0.25$, $b=0.75$. Suppose $R_1\sim\text{Pareto}(2)$, $R_2\sim\text{Pareto}(4)$,
$Z\sim \text{Beta}(0.05,0.1)$, $\Theta_2\sim \text{Unif}([0,1]\setminus [a,b])$, and $B\sim \text{Bernoulli}(0.5)$.
 Assume the random variables are all independent, and let $\Theta_1 := a + (b-a)Z$.
Now define the vector $(X,Y)$ as 
\begin{align*}
X &:= BR_1\Theta_1 + (1-B)R_2\Theta_2\\
Y &:= BR_1(1-\Theta_1) + (1-B)R_2(1-\Theta_2).
\end{align*}
By construction, $(X,Y)$ is MRV on $\RR^2_+\setminus \{\origin\}$
with tail parameter $\alpha = 2$.
The second order condition \eqref{e:2RVmeas} also holds since
\[
\frac{1}{t^{-1}}\left(t\PP\left[\left(\frac{R}{b(t)},\Theta\right)\in\cdot\right]-p\nu_2\times\PP[\Theta_1\in\cdot]\right)
\to (1-p)\nu_4\times\PP[\Theta_2\in\cdot].
\]
Furthermore, for 
\[
\mathbb{C}_{a,b} = \{(x,y)\in \RR^2_+: y/3\le x\le 3y\},
\]
the vector $(X,Y)$ has HRV on $\RR^2_+\setminus \mathbb{C}_{a,b}$
with tail parameter $\alpha_0 = 4$.

We generate $n = 30{,}000$ iid samples from the distribution of
$(X,Y)$.
The scatter plot is in the left panel of Figure~\ref{fig:scatter} and
illustrates
the dependence structure.
Thresholding with $k_n=100$ yields the histogram of
angles in the right panel of Figure~\ref{fig:scatter}; this also
describes the dependence structure of $(X,Y)$ and
based on high values of $\vert x\vert+\vert y\vert$, shows that the support 
of the angular measure is $[0.25,0.75]$. Based on this sample, the Hill estimate with $k_n=100$ is $H_{k_n,n}=0.473$.

\begin{figure}
\centering
\includegraphics[width=\textwidth]{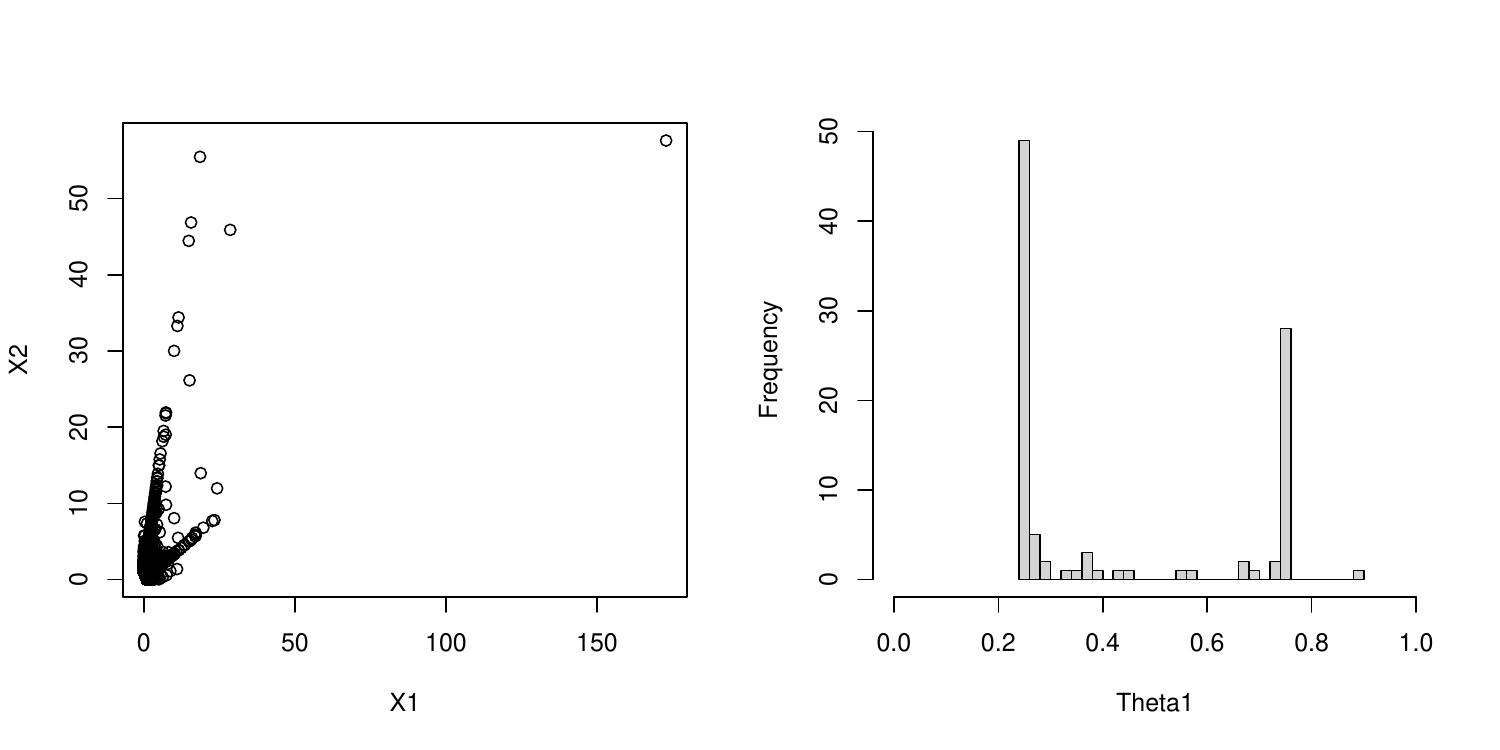}
\caption{Simulated data example 1. Left: Scatter plot of $30{,}000$ data points. Histogram of $\theta_1$ with $k_n=100$.}\label{fig:scatter}
\end{figure}

To estimate $[a,b]$, we solve the optimization problem in
\eqref{eq:opt}
\sid{using the {\tt constrOptim} function in R}, and Table~\ref{table:lambda_ex1} shows the estimated $[\hat{a},\hat{b}]$ for 
different choices of the tuning parameter $\lambda$. We see that for a bimodal histogram as in the right panel of Figure~\ref{fig:scatter},
solving \eqref{eq:opt} provides good estimates for $a,b$, invariant to different choices of the tuning parameter.
\begin{table}[h]\centering
\begin{tabular}{c|c|c|c|c|c}
$\lambda\sqrt{k_n}$ & $\sqrt{k_n}$ & $2\sqrt{k_n}$ & $2^2\sqrt{k_n}$ & $2^3\sqrt{k_n}$ & $2^4\sqrt{k_n}$\\
\hline
$[\hat{a}, \hat{b}]$ & $[0.25, 0.75]$ & $[0.25, 0.75]$ & $[0.250, 0.750]$ & $[0.25, 0.75]$ & $[0.25, 0.75]$
\end{tabular}
\caption{Estimated $[\hat{a},\hat{b}]$ for 
different choices of the tuning parameter $\lambda$ when $[a,b]=[0.25, 0.75]$.}\label{table:lambda_ex1}
\end{table}

Next, we set $m_n=500$, $k_{m_n}=25$, and generate $B=2{,}000$ bootstrap resamples to test
\[
H^{(1)}_0: \, S([0.25,0.75]) = 1,\qquad H^{(1)}_a:\, S([0.25,0.75]) < 1.
\]
For each bootstrap resample, we compute the corresponding $\widehat{D}_{m_n}^*$ and use \eqref{eq:bootD} to decide whether to reject $H^{(1)}_0$ or not.
Among the 2{,}000 bootstrap resamples generated, the rejection rate is 0.045, indicating we shall accept $H^{(1)}_0$.
In addition, applying the boostrap testing procedure for $H_0^{(3)}$, we have
\begin{align*}
\frac{\frac{1}{B-1}\sum_{t=1}^B \left(T_{m_n}^{(t)}-\bar{T}_{m_n}\right)^2}{\frac{1}{B-1}\sum_{t=1}^B \left(\widetilde{T}_{m_n}^{(t)}-\bar{\widetilde{T}}_{m_n}\right)^2}
&= 1.077 \in [0.916, 1.092] \\
&= [F_{0.025, 1999, 1999},F_{0.975, 1999, 1999}].
\end{align*}
This confirms the existence of strong dependence on $\RR_+^2\setminus\mathbb{C}_{a,b}$. 
To check further the existence of full dependence, we compute $SE_\text{boot}(m_n) = 0.546$ based on the bootstrap resamples, which gives
\[
k_{m_n}\frac{SE^2_b(m_n)}{H_{k_n,n}^2} = 1.336 > \chi^2_{.95, 1999}/1999 = 1.053.
\]
Hence, we reject the full dependence hypothesis in $H_0^{(2)}$.

\textbf{Example 2.} In fact, whether to reject $H_0^{(2)}$ also depends on the variability of $\Theta_1$. For instance, 
suppose instead the random variable $Z$ in the previous example follows a $\text{Beta}(1,2)$ distribution, and all other
assumptions remain identical to Example 1. 
The scatter plot and the histogram of angles are given in Figure~\ref{fig:scatter2}.

\begin{figure}
\centering
\includegraphics[width=\textwidth]{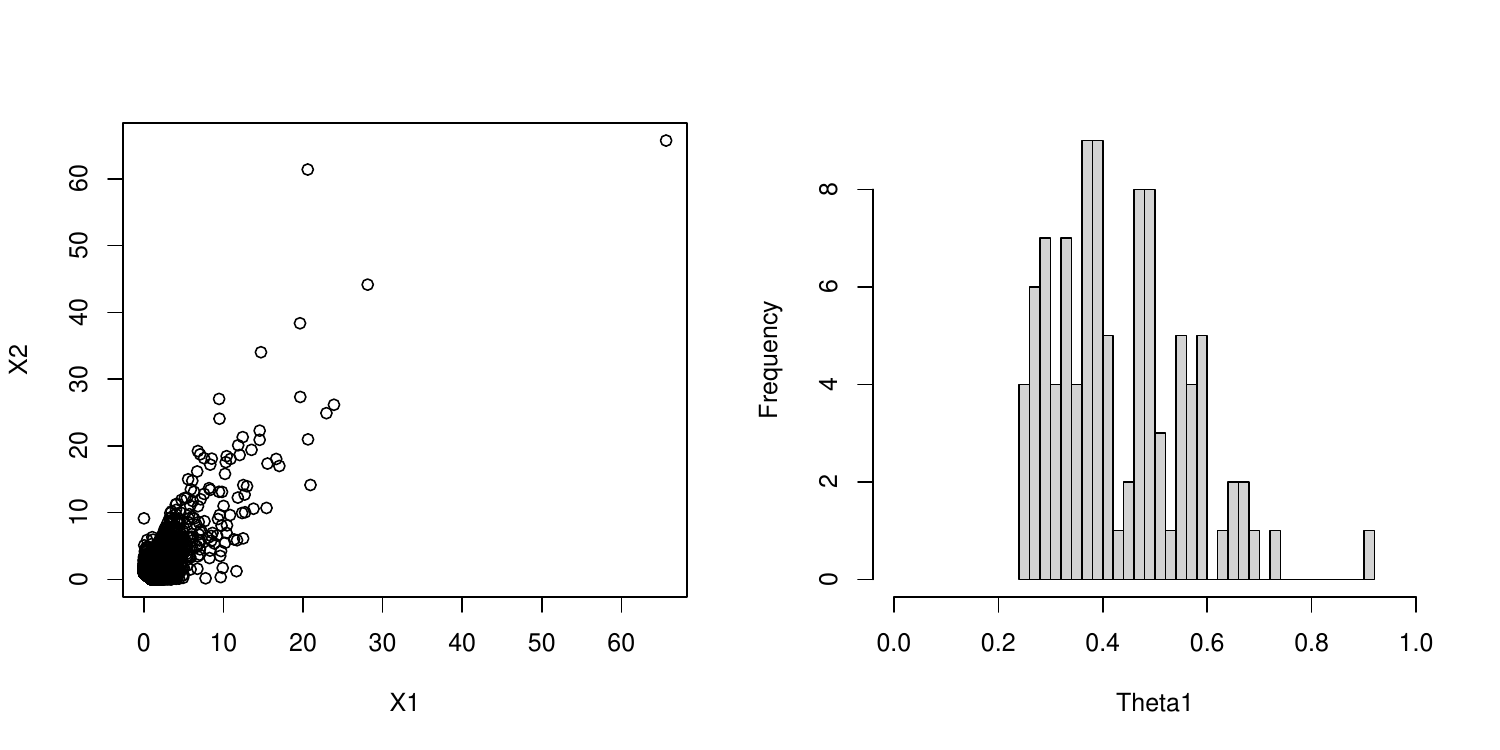}
\caption{Simulated data example 2. Left: Scatter plot of $30{,}000$ data points.Histogram of $\theta_1$ with $k_n=100$.}\label{fig:scatter2}
\end{figure}

We again estimate $[a,b]$ by solving the optimization problem in \eqref{eq:opt}, and Table~\ref{table:lambda_ex2} shows the estimated $[\hat{a},\hat{b}]$ for 
different choices of the tuning parameter $\lambda$. We see that here
the estimation procedure provides \sid{an} accurate estimate for $\hat{a}$
across  
all chosen values of $\lambda$, but estimated values of $\hat{b}$ are all smaller than $b=0.75$. Overall, $\lambda = 2^4$ provides the most accurate estimates.
\begin{table}[h]\centering
\begin{tabular}{c|c|c|c|c|c}
$\lambda\sqrt{k_n}$ & $\sqrt{k_n}$ & $2\sqrt{k_n}$ & $2^2\sqrt{k_n}$ & $2^3\sqrt{k_n}$ & $2^4\sqrt{k_n}$\\
\hline
$[\hat{a}, \hat{b}]$ & $[0.251, 0.554]$ & $[0.251, 0.596]$ & $[0.251, 0.597]$ & $[0.251, 0.654]$ & $[0.251, 0.670]$
\end{tabular}
\caption{Estimated $[\hat{a},\hat{b}]$ for 
different choices of the tuning parameter $\lambda$ when $[a,b]=[0.25, 0.75]$.}\label{table:lambda_ex2}
\end{table}

For this simulated dataset, we now proceed with the true values of $a$ and $b$.
Following the testing procedure 
as in the previous example, we see that out of $2{,}000$ bootstrap resamples, the overall rejection rate for $H^{(1)}_0$ is 0.0465, 
and the boostrap method for testing $H_0^{(3)}$ returns a test statistic 
\begin{align*}
\frac{\frac{1}{B-1}\sum_{t=1}^B \left(T_{m_n}^{(t)}-\bar{T}_{m_n}\right)^2}{\frac{1}{B-1}\sum_{t=1}^B \left(\widetilde{T}_{m_n}^{(t)}-\bar{\widetilde{T}}_{m_n}\right)^2}
&= 1.035 \in [0.916, 1.092] \\
&= [F_{0.025, 1999, 1999},F_{0.975, 1999, 1999}],
\end{align*}
confirming
the existence of strong dependence. However, these bootstrap resamples give $SE_\text{boot}(m_n) = 0.508$, thus giving
\[
k_{m_n}\frac{SE^2_\text{boot}(m_n)}{H_{k_n,n}^2} = 0.939 < \chi^2_{.95, 1999}/1999 = 1.053.
\]
This makes us fail to reject the full dependence hypothesis in $H_0^{(2)}$.
In fact, since $\Theta_1$ follows a Beta$(1,2)$ distribution, then we have $\mu = 0.417$ and $\sigma^2=0.014$, which leads to $(\sigma^2/\mu^2+1)^{1/2} = 1.039<1.053$.
Hence, the small variation in the underlying distribution of
$\Theta_1$ may lead to false acceptance of $H_0^{(2)}$. If replacing $[a,b]$ with the estimated $[\hat{a},\hat{b}] = [0.251, 0.670]$, 
we obtain the same conclusion.

So for this example, we fail to reject all three null hypotheses, $H_0^{(i)}$, $i=1,2,3$, and the third failure to reject is in error.
One way to fix this is to check the proportion of resamples among $t=1,\ldots,B$, which have
\begin{align}
\label{eq:full_test}
\left\vert T_{m_n}^{(t)}-H_{k_n,n}\right\vert > 1.96 \frac{H_{k_n,n}}{\sqrt{k_{m_n}}},
\end{align}
so that we will reject $H_0^{(2)}$.
Here the reject rate among the $2{,}000$ resamples is 0.059, which
suggests
we should reject the full dependence hypothesis; {this brings us to the
correct decision for this (simulated) data.}

\subsection{Real data examples}\label{sec:GetReal}
\begin{figure}
\centering
\includegraphics[width=\textwidth]{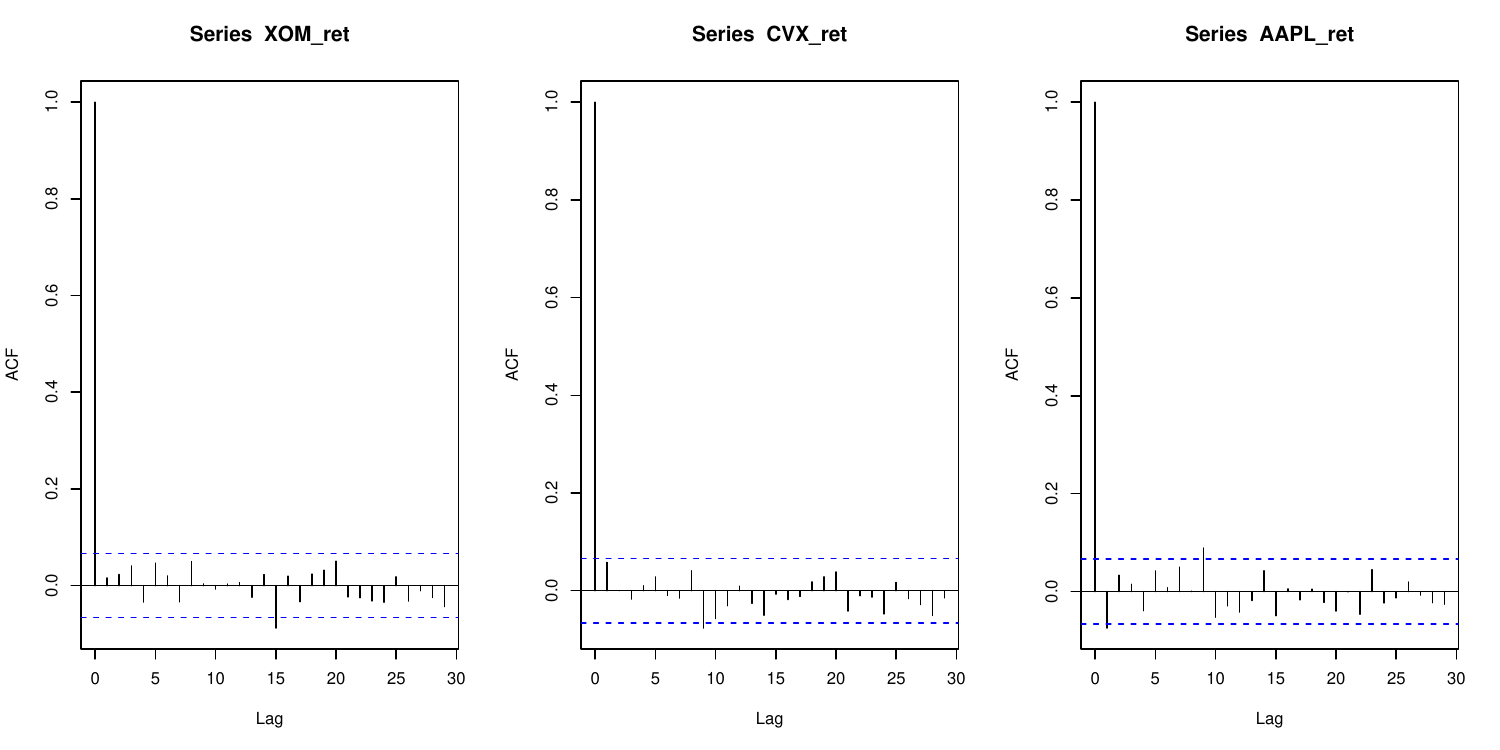}
\caption{Acf plots for the log returns of every-other-day stock prices.}\label{fig:acf}
\end{figure}

We now consider the application of the bootstrap method to real data. We download the daily adjusted stock prices of Chevron (CVX), Exxon (XOM) and Apple (AAPL) during the time 
period from January 04, 2016 to December 30, 2022. To remove the possible serial dependence of stock returns, we compute the log returns of these three stocks using
their every-other-day prices. 
The acf plots in Figure~\ref{fig:acf} show little serial dependence for all three stocks.
This leads to a reduced dataset of $n=880$ observations for each stock. 

\subsubsection{CVX vs XOM}
\begin{figure}
\centering
\includegraphics[width=\textwidth]{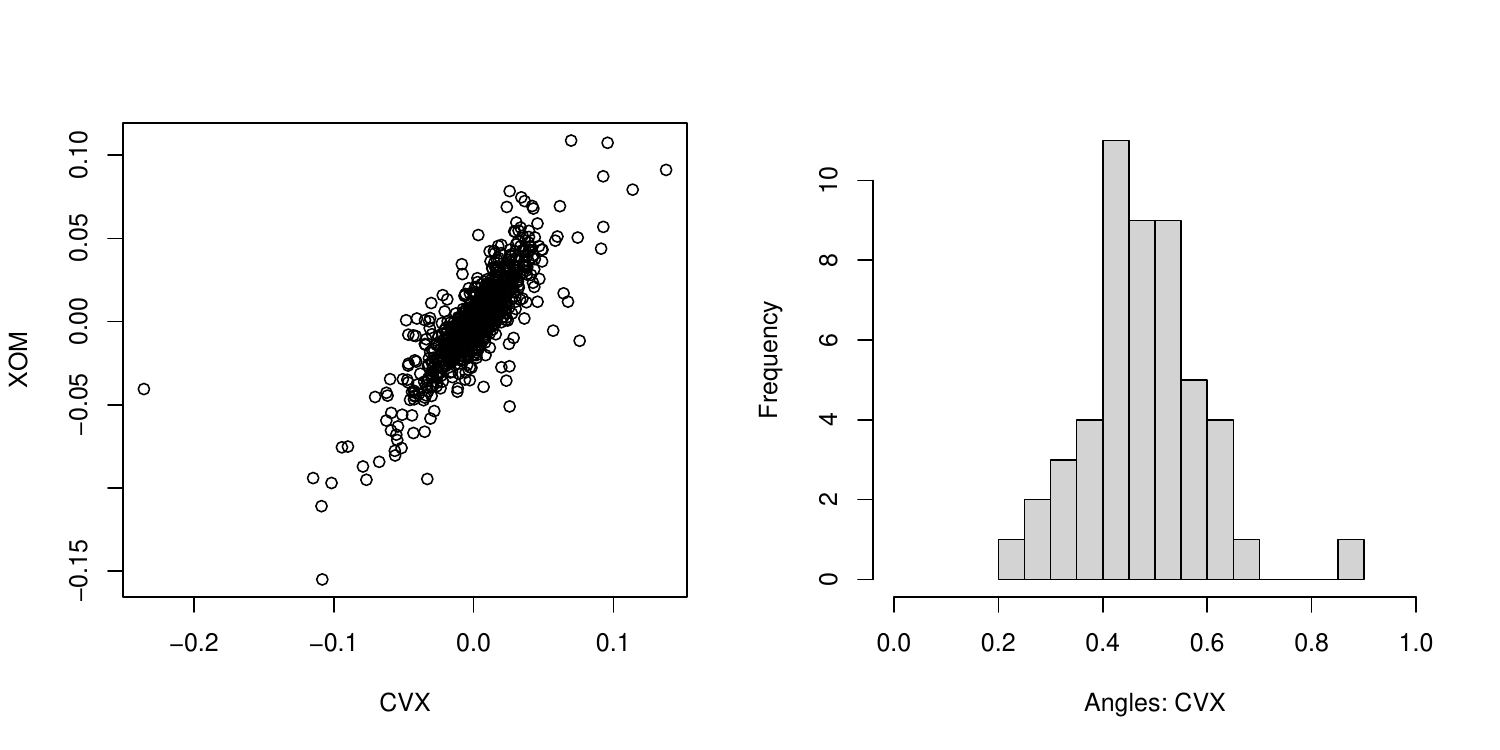}
\caption{CVX vs XOM. Left: Scatter plot of CVX and XOM returns. Right: Histogram of angles (absolute returns of CVX) with $k_n=100$.}\label{fig:CVX_XOM}
\end{figure}

In the left panel of Figure~\ref{fig:CVX_XOM}, we present the scatter plot of the log returns of CVX and XOM.
To understand the dependence structure between absolute log returns of CVX and XOM, we also graph the histogram of $\vert x\vert/(\vert x\vert+\vert y\vert)$
in the right panel of Figure~\ref{fig:CVX_XOM}, where the threshold is chosen as $k_n=100$.

The corresponding Hill estimate gives $\hat{\alpha}=1/H_{k_n,n} = 1/0.342=2.926$. 
By setting $\lambda = 4$, we obtain estimates
$\hat{a} = 0.336$ and $\hat{b}=0.853$ (estimates remain the same for $\lambda\ge 4$) as well as
\[
\frac{1}{k_n}\sum_{i=1}^{k_n}\Theta_i^* = 0.482 \equiv \widehat{\theta}_0.
\]
 Then we generate 
2{,}000 bootstrap resamples with $m_n = 200$ and $k_{m_n} = 20$ to test
\[
H^{(1)}_0: \, S([0.336,0.853]) = 1,\qquad H^{(1)}_a:\, S([0.336,0.853]) < 1.
\]
For each bootstrap resample, we compute the corresponding test
statistic $\widehat{D}_{k_{m_n}}^*$, and see that only 2.1\% of the
2000 bootstrap trials  reject $H^{(1)}_0$. 
In addition, consider strong vs weak dependence (i.e. $H^{(3)}_0$ vs $H^{(3)}_a$), and calculate
\begin{align*}
\frac{\frac{1}{B-1}\sum_{t=1}^B \left(T_{m_n}^{(t)}-\bar{T}_{m_n}\right)^2}{\frac{1}{B-1}\sum_{t=1}^B \left(\widetilde{T}_{m_n}^{(t)}-\bar{\widetilde{T}}_{m_n}\right)^2}
&= 1.072 \in [0.916, 1.092].
\end{align*}
Therefore, we accept the existence of strong dependence and conclude $S([0.336,0.853]) = 1$.

To distinguish between full and strong dependence, we obtain $SE_\text{boot}(m_n) = 0.317$, which leads to 
\[
k_{m_n}\frac{SE^2_\text{boot}(m_n)}{H_{k_n,n}^2} = 0.861 < \chi^2_{.95, 1999}/1999 = 1.053.
\]
So we fail to reject the hypothesis of full dependence, i.e. $H^{(2)}_0: S(\{0.482\}) = 1$.
Here even if we consider the rejection rate using the criterion in \eqref{eq:full_test}, only 3.35\% of the 2000 bootstrap trials rejects $H^{(2)}_0$.
Hence, we conclude that the absolute returns of CVX and XOM show full asymptotic dependence.

\subsubsection{CVX vs AAPL}
\begin{figure}
\centering
\includegraphics[width=\textwidth]{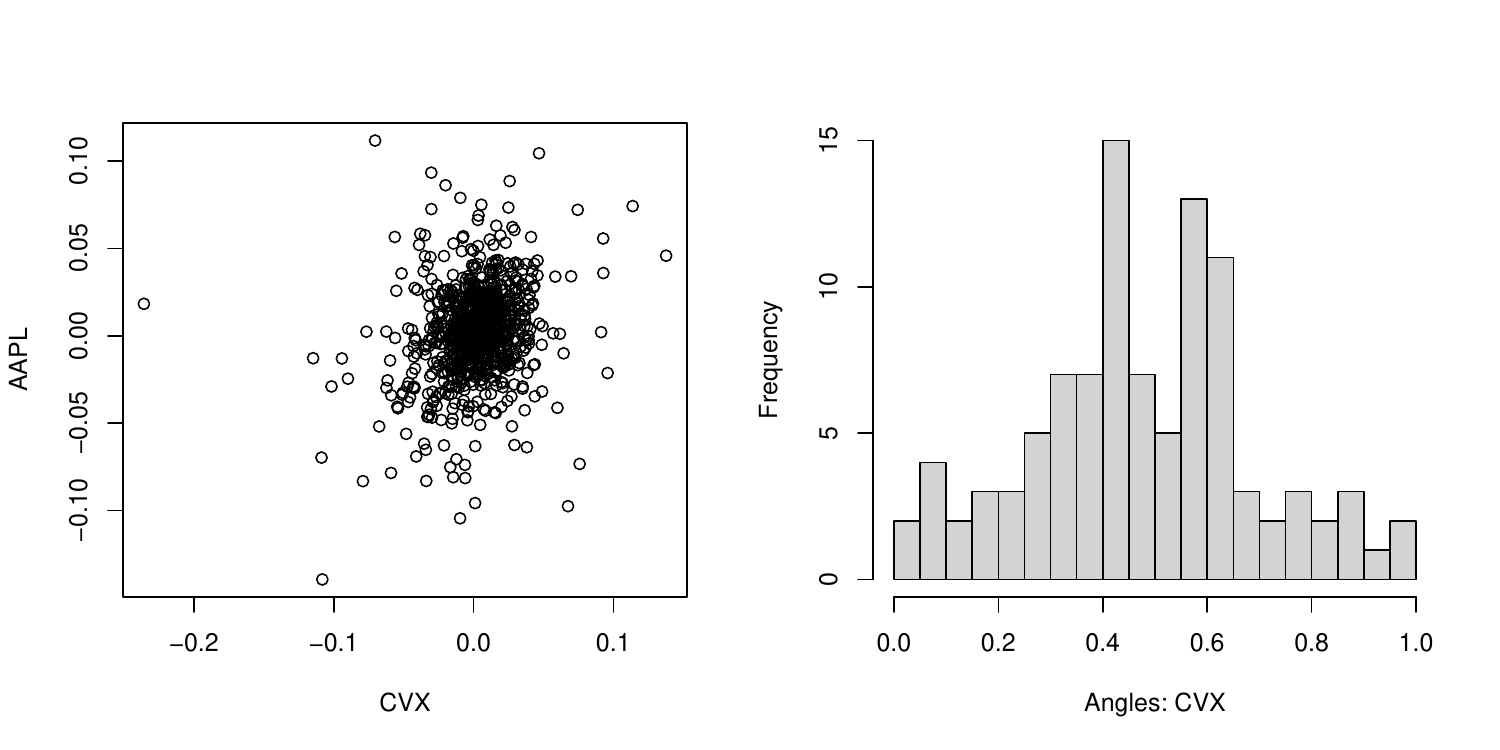}
\caption{CVX vs AAPL. Left: Scatter plot of CVX and AAPL returns. Right: Histogram of angles (absolute returns of CVX) with $k_n=100$.}\label{fig:CVX_AAPL}
\end{figure}

Next, we inspect the dependence structure between absolute returns of CVX and AAPL. Based on analyses using Hill plot (not shown), we choose $k_n=100$ and estimate the marginal tail indices 
$\hat{\alpha} = 1/0.294 = 3.401$.  
%Since the tails are estimated to have the same marginal tail index, we do not
%standardize the two absolute returns. 
We give the scatter plot and the histogram of $\vert x_1\vert/(\vert x_1\vert+\vert x_2\vert)$ in the left and right panels Figure~\ref{fig:CVX_AAPL}, respectively. %The threshold is set to be $k_n=50$.

%Based on the histogram, we choose $\hat{a} = 0.15$ and $\hat{b}=0.8$, which are the 4\%- and 90\%-percentiles of the angles, respectively.
We set $\lambda = 4$ and obtain the estimated $[\hat{a}, \hat{b}] = [0.182, 0.928]$ ($\lambda>4$ gives too wide an interval).
When testing 
\[
H^{(1)}_0: \, S([0.182, 0.928]) = 1\qquad\text{vs}\qquad H^{(1)}_a:\, S([0.182, 0.928]) < 1,
\]
we compute $\widehat{D}_{m_n}^*$ for each of the $2{,}000$ resamples and only 3.5\% of them rejects $H^{(1)}_0$.
Note that the estimated support $[\hat{a}, \hat{b}]$ is already quite wide, 
then the low rejection rate can be a result of having too wide a support of $S$.
So we 
next generate two sets of 2{,}000 bootstrap resamples with $m_n = 200$ and $k_{m_n} = 20$ to test
\[
H^{(3)}_0: \, S([0.182, 0.928]) = 1\qquad\text{vs}\qquad H^{(3)}_a:\, S([0,1]) = 1.
\]
This gives a test statistic 
\begin{align*}
\frac{\frac{1}{B-1}\sum_{t=1}^B \left(T_{m_n}^{(t)}-\bar{T}_{m_n}\right)^2}{\frac{1}{B-1}\sum_{t=1}^B \left(\widetilde{T}_{m_n}^{(t)}-\bar{\widetilde{T}}_{m_n}\right)^2}
&= 0.814 \notin [0.916, 1.092],
\end{align*}
indicating
the existence of weak dependence. Hence, we end up with the conclusion that considering the absolute returns of CVX and AAPL, 
%After calculating $\widehat{D}_{k_{m_n}}^*$ for each resample, we find only 1.45\% of
%the trials
%reject $H^{(1)}_0$.  if $H^{(3)}_0$ is true.
%Moreover, since $SE_\text{boot}(m_n) = 0.323$, then
%\[
%k_{m_n}\frac{SE^2_\text{boot}(m_n)}{H_{k_n,n}^2} = 1.235> \chi^2_{.95, 1999}/1999 = 1.053.
%\]
%Hence, we end up with the conclusion that the absolute returns of CVX and AAPL do not show strong dependence, and 
the support of the angular measure is likely to be $[0,1]$.

\section{Supplementary Material}
This supplement contains detailed proofs on the main theorems. Section \ref{sec:pf_strong} proves Theorem~\ref{thm:test_strong}, and Section \ref{sec:pf_fullstrong} proves asymptotic normality of the test statistic $T_n$ under the null hypothesis of full dependence. Section \ref{subsec:fclt} checks conditions of the functional central limit theorem in Theorem 10.6 of \cite{pollard:1984}.

\subsection{Proof of Theorem 3.1}\label{sec:pf_strong}

We start by showing that for 
intermediate sequence $\{k_n\}$ satisfying \eqref{eq:cond1}, 
\begin{align}
W_n(y):=&\sqrt{k_n}\left(\frac{1}{{k_n}}\sum_{i=1}^n
          \left(1+\frac{d^*\bigl((X_i,Y_i),\mathbb{C}_{a,b}\bigr)}{R_{(k_n)}}\right)\ind_{\left\{\frac{R_i}{b(n/k_n)}>y\right\}} 
          - y^{-\alpha}\right)\nonumber \\
  \Rightarrow &W(y^{-\alpha}),\label{eq:claim1}
\end{align}
in $D(0,\infty)$, where $W(\cdot)$ is a standard Brownian motion.

\sid{We begin by showing that the sequence of processes in the variable}
$y$ satisfy
\begin{align}
\label{e:conv0}
\frac{1}{\sqrt{k_n}}\sum_{i=1}^n
  \frac{d^*\bigl((X_i,Y_i),\mathbb{C}_{a,b}\bigr)}{b(n/k_n)}\ind_{\left\{\frac{R_i}{b(n/k_n)}>y\right\}}\Rightarrow
  0, \quad \text{ in }D(0,\infty),
\end{align}
\sid{and it is here that HRV and assumption $[a,b]\subsetneq [0,1]$ is
used.} 
We have  that
\begin{align*}
&\frac{1}{k_n}\sum_{i=1}^n \frac{d^*\bigl((X_i,Y_i),\mathbb{C}_{a,b}\bigr)}{b(n/k_n)}\ind_{\left\{\frac{R_i}{b(n/k_n)}>y\right\}}\\
& =
\frac{b_0(n/k_n)}{b(n/k_n)} \frac{1}{k_n}\sum_{i=1}^n \frac{d^*\bigl((X_i,Y_i),\mathbb{C}_{a,b}\bigr)}{b_0(n/k_n)}\ind_{\left\{\frac{R_i}{b(n/k_n)}>y\right\}}\\
 &\le \frac{b_0(n/k_n)}{b(n/k_n)} \frac{1}{k_n}\sum_{i=1}^n \frac{d^*\bigl((X_i,Y_i),\mathbb{C}_{a,b}\bigr)}{b_0(n/k_n)}\left(\ind_{\left\{\frac{R_i}{b(n/k_n)}>y, d^*\bigl((X_i,Y_i),\mathbb{C}_{a,b}\bigr)> b_0(n/k_n)\epsilon\right\}}\right.\\
 &\left. \qquad +\ind_{\left\{\frac{R_i}{b(n/k_n)}>y, d^*\bigl((X_i,Y_i),\mathbb{C}_{a,b}\bigr)\le  b_0(n/k_n)\epsilon\right\}}\right)\\
 &\le \frac{b_0(n/k_n)}{b(n/k_n)}\left(\frac{1}{k_n}\sum_{i=1}^n
   \frac{d^*\bigl((X_i,Y_i),\mathbb{C}_{a,b}\bigr)}{b_0(n/k_n)}\ind_{\left\{\frac{d^*\bigl((X_i,Y_i),\mathbb{C}_{a,b}\bigr)}{
   b_0(n/k_n)}>\epsilon\right\}}+\frac{\epsilon}{k_n}\sum_{i=1}^n\ind_{\left\{\frac{R_i}{b(n/k_n)}>y\right\}}\right)\\
&=A+B.
\end{align*}

\sid{To handle $B$ observe for each fixed $y>0$, that the monotone function
in $y$,
$$\frac{1}{k_n}\sum_{i=1}^n\ind_{\left\{\frac{R_i}{b(n/k_n)}>y\right\}}
\Rightarrow
y^{-\alpha_0}
$$ using, for example \cite[Theorem 5.3(ii), p. 139]{resnickbook:2007}.
Therefore,
when $k_n$ satisfies
\eqref{eq:cond1}, we have $\sqrt k_n B \Rightarrow 0 $ in
$D(0,\infty)$.}

For $A$ we claim
$$\frac{1}{k_n}\sum_{i=1}^n
   \frac{d^*\bigl((X_i,Y_i),\mathbb{C}_{a,b}\bigr)}{b_0(n/k_n)}\ind_{\left\{\frac{d^*\bigl((X_i,Y_i),\mathbb{C}_{a,b}\bigr)}{
   b_0(n/k_n)}>\epsilon\right\}} =O_p(1),
$$
since for any $M>0$ 
\begin{align*}
  \PP \Biggl[ \frac{1}{k_n}\sum_{i=1}^n&
   \frac{d^*\bigl((X_i,Y_i),\mathbb{C}_{a,b}\bigr)}{b_0(n/k_n)}\ind_{\left\{\frac{d^*\bigl((X_i,Y_i),\mathbb{C}_{a,b}\bigr)}{
                                                             b_0(n/k_n)}>\epsilon\right\}} >M \Biggr]\\
  \leq &\frac 1M \frac{n}{k_n} \EE \left(
      \frac{   d^*\bigl((X_1,Y_1),\mathbb{C}_{a,b}\bigr)}{b_0(n/k_n)}
         \ind_{\left\{\frac{d^*\bigl((X_1,Y_1),\mathbb{C}_{a,b}\bigr)}{
                                                             b_0(n/k_n)}>\epsilon\right\}}\right)\\
 \intertext{and because $\alpha_0>1$ we may apply Karamata's theorem
  on integration to get convergence, as $n\to\infty$ to}
  \to &\frac 1M \int_\epsilon^\infty x\nu_{\alpha_0} (dx)<\infty.
\end{align*}
Therefore $\sqrt k_n  A \Rightarrow 0$ in $D(0,\infty)$. This proves \eqref{eq:claim1}.

Since $R_{(k_n)}/b(n/k_n)\convp 1$
(eg. \cite[p. 82]{resnickbook:2007}), we also have
\[
\frac{1}{\sqrt{k_n}}\sum_{i=1}^n
\frac{d^*\bigl((X_i,Y_i),\mathbb{C}_{a,b}\bigr)}{R_{(k_n)}}\ind_{\left\{\frac{R_i}{b(n/k_n)}>y\right\}}\Rightarrow
0,\quad  \text{ in }D(0,\infty).
\]

So to prove \eqref{eq:claim1} it remains to verify
\begin{equation}\label{e:tailE}
\sqrt{k_n}\left(\frac{1}{{k_n}}\sum_{i=1}^n \ind_{\left\{\frac{R_i}{b(n/k_n)}>y\right\}}
  - y^{-\alpha}\right)\Rightarrow W(y^{-\alpha}),
\end{equation}
in $D(0,\infty)$. The regular variation of $P[R_1>x]$ implies
(\cite[Theorem 9.1, p. 292]{resnickbook:2007} or
\cite{dehaan:ferreira:2006})  that
$$
\sqrt{k_n}\left(\frac{1}{{k_n}}\sum_{i=1}^n \ind_{\left\{\frac{R_i}{b(n/k_n)}>y\right\}}
  - \frac{n}{k_n}\PP [R_1/b(n/k) >y]\right)\Rightarrow W(y^{-\alpha}),
$$
in $D(0,\infty)$ and  the 2RV assumption \sid{in \eqref{e:2rv}
  marginalized to the distribution of $R_1$ and the choice of $k_n$
in \eqref{eq:cond1} imply}
\[
\sqrt{k_n}\left(\frac{n}{k_n}\PP\left(\frac{R_1}{b(n/k_n)}>y\right)-y^{-\alpha}\right) \to 0,
\]
\sid{locally uniformly in $y$}.
This gives \eqref{e:tailE} and completes the proof of \eqref{eq:claim1}.

Apply the composition map $(x(t),c)\mapsto x(ct)$
\sid{from $D(0,\infty)\times (0,\infty)\mapsto D(0,\infty)$ 
to \eqref{eq:claim1} in the form
$$ \Bigl(  W_n(y), \frac{R_{(k_n)}}{b(n/k)} \Bigr)\mapsto
  W_n(\frac{R_{(k_n)}}{b(n/k)} y) \Rightarrow W(y^{-\alpha})
    $$
} to get
\begin{align}
\label{eq:dstar}
\sqrt{k_n}\Biggl( &\frac{1}{{k_n}}\sum_{i=1}^n \left(1+\frac{d^*\bigl((X_i,Y_i),\mathbb{C}_{a,b}\bigr)}{R_{(k_n)}}\right)\ind_{\left\{\frac{R_i}{R_{(k_n)}}>y\right\}}
- \sid{\bigl( \frac{R_{(k_n)}}{b(n/k)}     y
                   \bigr)^{-\alpha} }  \Biggr)  \nonumber\\
  &\Rightarrow W(y^{-\alpha}). 
\end{align}
\sid{Couple this with a Vervaat inversion of \eqref{e:tailE}
  (\cite[p. 357]{dehaan:ferreira:2006} or
  \cite[p. 57]{resnickbook:2007}). The inversion 
yields in $D(0,\infty)$ 
\begin{equation}\label{e:Wim}
  \sqrt{k_n}\Biggl(
\Bigl(  \frac{R_{([kt])}}{b(n/k_n)} \Bigr)^{-\alpha} - t\Biggr)
\Rightarrow -W(t),\quad \text{ in }D(0,\infty),
\end{equation}
  and the convergence is joint with the one in \eqref{eq:dstar}.
  Combining \eqref{eq:dstar} with \eqref{e:Wim} gives
\begin{align}
\sqrt{k_n}\Bigl( &\frac{1}{{k_n}}\sum_{i=1}^n \left(1+\frac{d^*\bigl((X_i,Y_i),\mathbb{C}_{a,b}\bigr)}{R_{(k_n)}}\right)\ind_{\left\{\frac{R_i}{R_{(k_n)}}>y\right\}}
-   y^{-\alpha} \Bigr) \nonumber\\
                 &\Rightarrow W(y^{-\alpha})-y^{-\alpha}W(1). \label{e:finally}
\end{align}
  }

Finally apply the mapping
\[
x\mapsto \int_1^\infty \frac{x(s)}{s}\dd s
\]
to \eqref{eq:dstar} \sid{using justifications similar to \cite[Section
9.1]{resnickbook:2007}.} This yields the asymptotic normality of
$D^*_n$ under the null hypothesis in $H^{(1)}_0$,
\begin{align*}
\sqrt{k_n}&\left(\frac{1}{{k_n}}\sum_{i=1}^n \left(1+\frac{d^*\bigl((X^*_i,Y^*_i),\mathbb{C}_{a,b}\bigr)}{R_{(k_n)}}\right)\log\frac{R_{(i)}}{R_{(k_n)}}
- \frac{1}{\alpha}\right)\\
&\Rightarrow \frac{1}{\alpha}\left(\int_0^1 \frac{W(s)}{s}\dd
                              s-W(1)\right)\stackrel{d}{=}\frac{1}{\alpha}W(1)\sim N(0,1/\alpha^2).
\end{align*}

\subsection{Proof of Theorem 4.1}\label{sec:pf_fullstrong}
 The proof proceeds in a series of steps.
  \medskip
  \begin{enumerate}
  \item
    To prove the convergence in Eq.(23) of main document under $H^{(2)}_0$, we first show that in $D(0,\infty)$,
\begin{align}
\label{eq:step1a}
\tilde W_n(y):=\frac{\sqrt{k_n}}{\theta_0}\left(\frac{1}{k_n}\sum_{i=1}^n \Theta_i\epsilon_{R_i/b(n/k_n)}(y,\infty) - \theta_0 y^{-\alpha} \right)
\Rightarrow  W(y^{-\alpha}).
\end{align}
The LHS of \eqref{eq:step1a} \sid{can be decomposed as}
\begin{align}
\label{eq:step1a_LHS}
\frac{1}{\theta_0}\frac{1}{\sqrt{k_n}}\sum_{i=1}^n&
  \left(\Theta_i-\theta_0\right)\epsilon_{R_i/b(n/k_n)}(y,\infty)
                                  \nonumber \\
&+ \sqrt{k_n}\left(\frac{1}{k_n}\sum_{i=1}^n \epsilon_{R_i/b(n/k_n)}(y,\infty) - y^{-\alpha}\right)=:B1+B2.
\end{align}
From full dependence, we have $\mathbb{C}_{a,b} \equiv
\{(x,y)\in \mathbb{R}_+^2: y=(1/\theta_0-1) x\}$. \sid{Remember
  $\Theta_i=X_i/R_i=X_i/(X_i+Y_i)$ and then
$\vert B1\vert$ is bounded by}
\begin{align}
\frac{1}{\theta_0}\frac{1}{\sqrt{k_n}}&\sum_{i=1}^n
\Bigl \vert\frac{X_i}{R_i}-\theta_0\Bigr\vert\epsilon_{R_i/b(n/k_n)}(y,\infty)\nonumber\\
  =&\frac{1}{\theta_0}\frac{1}{\sqrt{k_n}}
                                        \sum_{i=1}^n \theta_0 
\frac{\vert Y_i-X_i(\theta_0^{-1}-1) \vert }{R_i} \ind_{\{R_i>b(n/k_n)y\}}       \nonumber\\
  =& \frac{1}{\sqrt{k_n}}\sum_{i=1}^n\frac{d^*((X_i,Y_i),
         \mathbb{C}_{a,b})}{b(n/k_n)y}\ind_{\{R_i>b(n/k_n)y\}} 
\Rightarrow 0\label{eq:conv_dist}
\end{align}
in $D(0,\infty)$, \sid{since   \eqref{e:conv0} is still applicable.}
\sid{As in \eqref{e:tailE},} the second term \sid{$B2$} in \eqref{eq:step1a_LHS} converges weakly \sid{in
$D(0,\infty)$} to
$W(y^{-\alpha})$ under the 2RV condition for $\PP[R_1>x]$, thus
completing the proof of \eqref{eq:step1a}.

Combine \eqref{eq:step1a} with 
\[
\frac{R_{(k_n)}}{b(n/k_n)}\convp 1,
\]
\sid{to get joint convergence in $D(0,\infty)\times \RR_+$.} Applying
the 
composition map $(x(t),c)\mapsto x(ct)$ from
\sid{$D(0,\infty)\times (0,\infty)\mapsto D(0,\infty)$}, we get in $D(0,\infty)$,
\begin{align}
\label{eq:step1b}
\frac{\sqrt{k_n}}{\theta_0}\left(\frac{1}{k_n}\sum_{i=1}^n \Theta_i\epsilon_{R_i/R_{(k_n)}}(y,\infty) 
- \theta_0 \left(\frac{R_{(k_n)}}{b(n/k_n)} y\right)^{-\alpha} \right)
\Rightarrow  W(y^{-\alpha}).
\end{align}
%{\sid{\Large Need to re-write. Inversion applied directly to
% \eqref{eq:step1b} doesnt give \eqref{e:center1st} directly.}}
Apply Vervaat inversion again as in \eqref{e:Wim} and
\eqref{e:finally}, we again conclude
\begin{equation}\label{e:center1st}
\sqrt{k_n} \left(\frac{1}{k_n \theta_0}\sum_{i=1}^n \Theta_i\epsilon_{R_i/R_{(k_n)}}(y,\infty) 
- y^{-\alpha }  \right)
\Rightarrow  W(y^{-\alpha}) -y^{-\alpha}W(1).
\end{equation}

%Since
%\[
%\sqrt{k_n}\left(\left(\frac{R_{(k_n)}}{b(n/k_n)} \right)^{-\alpha}-1\right)\Rightarrow W(1),
%\]
%we obtain from \eqref{eq:step1b} that
%\begin{align}
%\label{eq:step1c}
%\frac{\sqrt{k_n}}{\theta_0}\left(\frac{1}{k_n}\sum_{i=1}^n \Theta_i\epsilon_{R_i/R_{(k_n)}}(y,\infty) 
%- \theta_0 y^{-\alpha}\right)
%\Rightarrow  W(y^{-\alpha})-y^{-\alpha}W(1).
%\end{align}

%\emph{Step 2.}
\item 
Next, we need to justify application of the map
\[
x\mapsto \int_1^\infty \frac{x(s)}{s}\dd s
\]
to \eqref{e:center1st}, which, if justified, leads to
\begin{equation}\label{e:step2}
\sqrt{k_n}
\left(\frac{1}{k_n \theta_0}\sum_{i=1}^{k_n}
  \Theta^*_i\log \frac{R_{(i)}}{R_{(k_n)}}
  -\frac{1}{\alpha} \right)
\Rightarrow
\int_1^\infty \frac{W(y^{-\alpha})}{y}\dd y -\frac{1}{\alpha} W(1)\sim
\frac{1}{\alpha}N(0,1).
 \end{equation}

The proof is similar to the one given in Proposition~9.1 of \cite{resnickbook:2007},
and we defer details to Section~\ref{subsec:mapping_pf}.

\medskip

%{\it Step 2.5.}
\item From \eqref{e:step2}, 
\begin{equation}
\label{eq:consistency}
\frac{1}{k_n}\sum_{i=1}^{k_n} \Theta^*_i\log \frac{R_{(i)}}{R_{(k_n)}}
\convp \frac{\theta_0}{\alpha},
\end{equation}
\sid{which suggests comparing}
\begin{align}\label{e:Comp}
\sqrt{k_n}\Bigl(T_n &- \frac{1}{\theta_0}\frac{1}{k_n}\sum_{i=1}^{k_n}
            \Theta^*_i\log \frac{R_{(i)}}{R_{(k_n)}} \Bigr)  \\
=& \sqrt{k_n}\Bigl(\frac{1}{\frac{1}{k_n}\sum_{i=1}^{k_n} \Theta^*_i}
                                                                                -
   \frac{1}{\theta_0}\Bigr)\frac{1}{k_n}\sum_{i=1}^{k_n}
   \Theta^*_i\log \frac{R_{(i)}}{R_{(k_n)}}  \nonumber\\
  \intertext{and applying \eqref{eq:consistency}, this is}
  =& \sqrt{k_n}\Bigl(\frac{1}{\frac{1}{k_n}\sum_{i=1}^{k_n}
     \Theta^*_i}      -
     \frac{1}{\theta_0}\Bigr) O_p(1)=
     \sqrt{k_n}\Biggl(\frac{\theta_0-\frac{1}{k_n} \sum_{i=1}^{k_n}
     \Theta^*_i     }{\frac{1}{k_n}\sum_{i=1}^{k_n}
     \Theta^*_i \theta_0}                                                                                 
     \Biggr) O_p(1)
     .\label{e:oneMore}
\end{align}
Now
$$\frac{1}{k_n} \sum_{i=1}^n \epsilon_{(\Theta_i, R_i/R_{(k)})}
\Rightarrow S\times \nu_\alpha=\epsilon_{\theta_0} \times \nu_\alpha
$$
in $\mathbb{M}([0,1]\times \RR_+\setminus \{0\})$
(eg. \cite[p. 180]{resnickbook:2007})
and so
\begin{align*}
\int_{[0,1]\times (1,\infty)} \theta &\frac{1}{k_n} \sum_{i=1}^n
\epsilon_{(\Theta_i, R_i/R_{(k)})} (d\theta,dr)=\frac{1}{k_n}
  \sum_{i=1}^n \Theta_i \ind_{[R_i>R_{(k_n)}]}\\
  =&\frac{1}{k_n} \sum_{i=1}^{k_n} \Theta_i^*
  \Rightarrow 
\int_{[0,1]\times (1,\infty)} \theta \epsilon_{\theta_0}\times
  \nu_\alpha(d\theta,dr)= \theta_0.
\end{align*}
Thus the denominator of \eqref{e:oneMore} is also $O_p(1)$.
A successful comparison in \eqref{e:Comp} has the difference
converging to $0$ and so it remains to show
\begin{equation}\label{e:please0}
  \frac{1}{\sqrt{k_n}}\Bigl \vert k_n \theta_0- \sum_{i=1}^{k_n}
  \Theta^*_i   \Bigr \vert \Rightarrow 0.\end{equation}
  
Since under $H^{(2)}_0$,
\begin{align*}
\frac{1}{\sqrt{k_n}}\Bigl \vert\sum_{i=1}^{k_n}(\Theta_i^*&- \theta_0)\Bigr\vert
= \frac{1}{\sqrt{k_n}}\left\vert\sum_{i=1}^{n}(\Theta_i-
                                                           \theta_0)\ind_{\{R_i\ge
                                                           R_{(k_n)}\}}\right\vert
  \\
&\le \frac{1}{\sqrt{k_n}}\sum_{i=1}^n                                                                     \frac{d^*((X_i,Y_i),\mathbb{C}_{a,b})}{R_{(k_n)}}\ind_{\{R_i\ge R_{(k_n)}\}}
  \convp 0,
\end{align*}
which can be seen as in the proof of \eqref{e:conv0} by replacement
of $R_{(k_n)}$ by 
$b(n/k_n)$ at the cost of $1\pm \epsilon$ for any
$\epsilon>0$ with high probability.
This confirms \eqref{e:please0} and thus proves convergence to 0 in
\eqref{e:Comp}. In turn, this coupled with \eqref{e:step2} proves the theorem.
\end{enumerate}

\subsubsection{Details for Step 2 in the proof of Theorem~4.1}\label{subsec:mapping_pf}
The proof of \eqref{e:step2} requires justifying the application of the mapping
\[
x\mapsto \int_1^\infty x(s)\frac{\dd s}{s},
\]
and applying this mapping to \eqref{eq:step1b} %\eqref{e:center1st}
leads to 
\begin{align}\label{eq:step2a}
\frac{\sqrt{k_n}}{\theta_0}\left(\frac{1}{k_n}\sum_{i=1}^{k_n} \Theta^*_i\log \frac{R_{(i)}}{R_{(k_n)}} - \frac{\theta_0}{\alpha}\left(\frac{R_{(k_n)}}{b(n/k_n)}\right)^{-\alpha}\right)
\Rightarrow \int_1^\infty \frac{W(y^{-\alpha})}{y}\dd y.
\end{align}
For $M$ large, applying the map
\[
x\mapsto \int_1^M \frac{x(s)}{s}\dd s
\]
to \eqref{eq:step1b} gives
\begin{align*}
\frac{\sqrt{k_n}}{\theta_0}&\left(\int_1^M\frac{1}{k_n}\sum_{i=1}^n \Theta_i\epsilon_{R_i/R_{(k_n)}}(s,\infty)\frac{\dd s}{s} 
- \left(\frac{R_{(k_n)}}{b(n/k_n)}\right)^{-\alpha}\theta_0 \int_1^M s^{-\alpha-1}\dd s\right)\\
&\Rightarrow \int_1^M W(s^{-\alpha})\frac{\dd s}{s}.
\end{align*}
As $M\to\infty$,
\[
\int_1^M W(s^{-\alpha})\frac{\dd s}{s}
\to \int_1^\infty \frac{W(s^{-\alpha})}{s}\dd s.
\]
Hence, it remains to verify that for any $\delta>0$,
\begin{align}
\label{eq:step2_map}
\lim_{M\to\infty}\limsup_{n\to\infty}\, &\PP\left(\sqrt{k_n}\left\vert
\int_M^\infty\frac{1}{k_n}\sum_{i=1}^n \Theta_i \epsilon_{R_i/R_{(k_n)}}(s,\infty)\frac{\dd s}{s}\right.\right.\nonumber\\
&\left.\left.\qquad-\theta_0 \left(\frac{R_{(k_n)}}{b(n/k_n)}\right)^{-\alpha}\int_M^\infty s^{-\alpha-1}\dd s
\right\vert>\delta\right)=0
\end{align}
Rewrite the probability in \eqref{eq:step2_map} as
\begin{align*}
\PP&\left(\sqrt{k_n}\left\vert
\int_M^\infty \left(\frac{1}{k_n}\sum_{i=1}^n \Theta_i \epsilon_{R_i/R_{(k_n)}}(s,\infty)
- \theta_0\left(\frac{R_{(k_n)}}{b(n/k_n)}s\right)^{-\alpha}\right)\frac{\dd s}{s}
\right\vert>\delta\right)\\
&\le \PP\left(\sqrt{k_n}
\int_M^\infty\left\vert\frac{1}{k_n}\sum_{i=1}^n \Theta_i \epsilon_{R_i/R_{(k_n)}}(s,\infty)
- \theta_0\left(\frac{R_{(k_n)}}{b(n/k_n)}s\right)^{-\alpha}\right\vert\frac{\dd s}{s}
>\delta\right)\\
&= \PP\left(\sqrt{k_n}
\int_{M R_{(k_n)}/b(n/k_n)}^\infty\left\vert\frac{1}{k_n}\sum_{i=1}^n \Theta_i \epsilon_{R_i/b(n/k_n)}(s,\infty)
- \theta_0 s^{-\alpha}\right\vert\frac{\dd s}{s}
>\delta\right)\\
&\le \PP\left(\sqrt{k_n}
\int_{M (1-\eta)}^\infty\left\vert\frac{1}{k_n}\sum_{i=1}^n \Theta_i \epsilon_{R_i/b(n/k_n)}(s,\infty)
- \theta_0 s^{-\alpha}\right\vert\frac{\dd s}{s}
>\delta\right)\\
&\qquad\qquad + \PP\left(\left\vert \frac{R_{(k_n)}}{b(n/k_n)}-1\right\vert>\eta\right),
\end{align*}
for $\eta>0$. Since $R_{(k_n)}/b(n/k_n)\convp 1$, it suffices to consider
\begin{align*}
\PP&\left(\sqrt{k_n}
\int_{M (1-\eta)}^\infty\left\vert\frac{1}{k_n}\sum_{i=1}^n \Theta_i \epsilon_{R_i/b(n/k_n)}(s,\infty)
- \theta_0 s^{-\alpha}\right\vert\frac{\dd s}{s}
>\delta\right)\\
& \le \frac{k_n}{\delta^2}\EE\left[\int_{M
                   (1-\eta)}^\infty\left(\frac{1}{k_n}\sum_{i=1}^n
                   \Theta_i \epsilon_{R_i/b(n/k_n)}(s,\infty) 
                   -\theta_0 s^{-\alpha}\right)^2\frac{\dd
                   s}{s}\right].
\end{align*}
Write what is inside the square by centering the random term to get
  \begin{align*}
    \frac{1}{k_n}&\sum_{i=1}^n
  \Theta_i \epsilon_{R_i/b(n/k_n)}(s,\infty) -\frac{n}{k_n}
  \EE\left(\Theta_1\ind_{\{R_1>b(n/k_n)s\}}\right)
\\
    +&
  \frac{n}{k_n}
  \EE\left(\Theta_1\ind_{\{R_1>b(n/k_n)s\}}\right)- \theta_0
  s^{-\alpha}
 \end{align*}
 and we
  get the probability in \eqref{eq:step2_map} bounded by
  \begin{align*}
&\le \frac{k_n}{\delta^2} \int_{M (1-\eta)}^\infty \frac{n}{k_n^2}\text{Var}\left(\Theta_1\ind_{\{R_1>b(n/k_n)s\}}\right)\frac{\dd s }{s}\\
&\quad+ \frac{k_n}{\delta^2} \int_{M (1-\eta)}^\infty \left(\frac{n}{k_n}\EE\left(\Theta_1\ind_{\{R_1>b(n/k_n)s\}}\right)- \theta_0 s^{-\alpha}\right)^2\frac{\dd s}{s}\\
%&\le \frac{1}{\delta^2}\int_{M (1-\eta)}^\infty \frac{n}{k_n} \EE\left(\Theta_1\ind_{\{R_1>b(n/k_n)s\}}\right)\frac{\dd s}{s}\\
&=: I_n + II_n
%&\quad+ \frac{k_n}{\delta^2} \int_{M (1-\eta)}^\infty \left(\frac{n}{k_n}\EE\left(\Theta_1\ind_{\{R_1>b(n/k_n)s\}}\right)- \theta_0 %s^{-\alpha}\right)^2\frac{\dd s}{s}.
\end{align*}
Since $\Theta_1\le 1$ a.s., 
the term $I_n$ is bounded by 
\[
  \frac{1}{\delta^2}\int_{M (1-\eta)}^\infty \frac{n}{k_n}
  \EE\left(\Theta_1\ind_{\{R_1>b(n/k_n)s\}}\right)^{\sid{2}}\frac{\dd s}{s}
\le \frac{1}{\delta^2}\int_{M (1-\eta)}^\infty \frac{n}{k_n} \PP(\{R_1>b(n/k_n)s)\frac{\dd s}{s}.
\]
By Karamata's theorem, the right side converges as $n\to\infty$ to
$$\frac{1}{\delta^2}\int_{M (1-\eta)}^\infty s^{-\alpha-1}\dd s = \frac{1}{\alpha\delta^2} (M(1-\eta))^{-\alpha}
\stackrel{M\to\infty}{\longrightarrow}0.
$$

For $II_n$, with $v(t) = \EE(\Theta_1 \ind_{\{R_1>t\}})$, we notice that
\begin{align*}
&\frac{\theta_0^2}{ A^2(n/k_n)}\left(\frac{n}{k_n\theta_0}\EE\left(\Theta_1\ind_{\{R_1>b(n/k_n)s\}}\right)-  s^{-\alpha}\right)^2\\
&= \frac{\theta_0^2}{A^2(n/k_n)}\left(\frac{n}{k_n\theta_0}\EE\left(\Theta_1\ind_{\{R_1>b(n/k_n)s\}}\right)-\frac{v(b(n/k_n)s)}{v(b(n/k_n))} +  \frac{v(b(n/k_n)s)}{v(b(n/k_n))}- s^{-\alpha}\right)^2\\
&\le \left\vert\frac{\theta_0}{A(n/k_n)}\left(\frac{n}{k_n\theta_0}\EE\left(\Theta_1\ind_{\{R_1>b(n/k_n)s\}}\right)-\frac{v(b(n/k_n)s)}{v(b(n/k_n))}\right)\right\vert^2\\
&\quad + \left\vert\frac{\theta_0}{A(n/k_n)}\left(\frac{v(b(n/k_n)s)}{v(b(n/k_n))}- s^{-\alpha}\right)\right\vert^2\\
&\quad + 2\left\vert\frac{\theta_0}{A(n/k_n)}\left(\frac{n}{k_n\theta_0}\EE\left(\Theta_1\ind_{\{R_1>b(n/k_n)s\}}\right)-\frac{v(b(n/k_n)s)}{v(b(n/k_n))}\right)\right\vert\\
&\qquad\times\left\vert\frac{\theta_0}{A(n/k_n)}\left(\frac{v(b(n/k_n)s)}{v(b(n/k_n))}- s^{-\alpha}\right)\right\vert.
\end{align*}
By \cite[Theorem~2.3.9]{dehaan:ferreira:2006}, for any $\epsilon>0$, $\delta\in (0,\alpha(1+\rho))$, 
there exists $A_0(n/k_n)\sim A(n/k_n)$ as $n\to\infty$ and $n_0\equiv n_0(\epsilon,\delta)$ such that for all $b(n/k_n), b(n/k_n)s\ge n_0$,
\begin{align*}
\left\vert\frac{1}{A_0(n/k_n)}\left(\frac{v(b(n/k_n)s)}{v(b(n/k_n))}- s^{-\alpha}\right)-s^{-\alpha}\frac{1-s^{-\alpha\rho}}{\alpha\rho}\right\vert
\le \epsilon s^{-\alpha(1+\rho)}\max\{s^{-\delta},s^{\delta}\},
\end{align*}
so that
\begin{align*}
&\left\vert\frac{1}{A_0(n/k_n)}\left(\frac{v(b(n/k_n)s)}{v(b(n/k_n))}- s^{-\alpha}\right)\right\vert\\
&\le s^{-\alpha}\left(\left\vert\frac{1-s^{-\alpha\rho}}{\alpha\rho}\right\vert+\epsilon s^{-\alpha\rho}\max\{s^{-\delta},s^{\delta}\}\right).
\end{align*}
Hence, 
\begin{align*}
\int_{M(1-\eta)}^\infty& \left\vert\frac{\theta_0}{A_0(n/k_n)}\left(\frac{v(b(n/k_n)s)}{v(b(n/k_n))}- s^{-\alpha}\right)\right\vert^2\frac{\dd s}{s}\\
&\le \int_{M(1-\eta)}^\infty s^{-2\alpha}\left(\left\vert\frac{1-s^{-\alpha\rho}}{\alpha\rho}\right\vert+\epsilon s^{-\alpha\rho}\max\{s^{-\delta},s^{\delta}\}\right)^2\frac{\dd s}{s}<\infty,
\end{align*}
which further implies
\begin{align*}
\int_{M(1-\eta)}^\infty& \left\vert\frac{\theta_0}{A(n/k_n)}\left(\frac{v(b(n/k_n)s)}{v(b(n/k_n))}- s^{-\alpha}\right)\right\vert^2\frac{\dd s}{s}<\infty,
\end{align*}
as $A_0(n/k_n)\sim A(n/k_n)$. 
In addition, since
\begin{align*}
\frac{\theta_0}{A(n/k_n)}&\left(\frac{n}{k_n\theta_0}\EE\left(\Theta_1\ind_{\{R_1>b(n/k_n)s\}}\right)-\frac{v(b(n/k_n)s)}{v(b(n/k_n))}\right)\\
&= \frac{v(b(n/k_n)s)}{v(b(n/k_n))\theta_0}\frac{1}{A(n/k_n)}\left(\frac{n}{k_n}v(b(n/k_n))-\theta_0\right),
\end{align*}
then 
\begin{align*}
\int_{M(1-\eta)}^\infty &\left\vert\frac{\theta_0}{A(n/k_n)}\left(\frac{n}{k_n\theta_0}\EE\left(\Theta_1\ind_{\{R_1>b(n/k_n)s\}}\right)-\frac{v(b(n/k_n)s)}{v(b(n/k_n))}\right)\right\vert^2\frac{\dd s}{s}\\
& = \left\vert\frac{1}{A(n/k_n)}\left(\frac{n}{k_n}v(b(n/k_n))-\theta_0\right)\right\vert^2 \int_{M(1-\eta)}^\infty \frac{v^2(b(n/k_n)s)}{\theta_0^2 v^2(b(n/k_n))}\frac{\dd s}{s}\\
&\to \left(\frac{1}{\theta_0}\int\int_{(1,\infty)\times [0,1]}\theta\chi(\dd r, \dd\theta)\right)^2 \frac{(M(1-\eta))^{-2\alpha}}{2\alpha}<\infty.
\end{align*}
%Therefore, \sid{\Large (??!! How?)}by the dominated convergence theorem,
%\begin{align*}
%\int_{M(1-\eta)}^\infty\frac{1}{A^2(n/k_n)}&\left(\frac{n}{k_n}\EE\left(\Theta_1\ind_{\{R_1>b(n/k_n)s\}}\right)- \theta_0 s^{-\alpha}\right)^2\frac{\dd s}{s}\\
%&\to \int_{M(1-\eta)}^\infty \left(\chi((s,\infty)\times [0,1])\right)^2\frac{\dd s}{s}.
%\end{align*}
Since under the condition in \eqref{eq:cond1}, the intermediate sequence $\{k_n\}$ also satisfies
$\sqrt{k_n}A(n/k_n)\to 0$ as $n\to\infty$. We then see that
\begin{align*}
k_n& \int_{M(1-\eta)}^\infty\left\vert\left(\frac{n}{k_n}\EE\left(\Theta_1\ind_{\{R_1>b(n/k_n)s\}}\right)-\theta_0\frac{v(b(n/k_n)s)}{v(b(n/k_n))}\right)\right\vert^2\frac{\dd s}{s}\\
=&(\sqrt{k_n}A(n/k_n))^2 \\
&\quad\times\int_{M(1-\eta)}^\infty\left\vert\frac{\theta_0}{A(n/k_n)}\left(\frac{n}{k_n\theta_0}\EE\left(\Theta_1\ind_{\{R_1>b(n/k_n)s\}}\right)-\frac{v(b(n/k_n)s)}{v(b(n/k_n))}\right)\right\vert^2\frac{\dd s}{s}\to 0,\\
\intertext{and similarly,}
k_n&\int_{M(1-\eta)}^\infty \left\vert {\theta_0}\left(\frac{v(b(n/k_n)s)}{v(b(n/k_n))}- s^{-\alpha}\right)\right\vert^2 \frac{\dd s}{s}\to 0.
\end{align*}
So we conclude that $II_n\to 0$. This justifies \eqref{eq:step2_map}, thus completing the proof of \eqref{eq:step2a}.

%For the second term, assuming \eqref{eq:chi0},
%since the 2RV condition in \eqref{e:2RVmeas} implies
%\[
%\frac{1}{A(n/k_n)} \left(\frac{n}{k_n}\EE\left(\Theta_1\ind_{\{R_1>b(n/k_n)s\}}\right) - \theta_0 s^{-\alpha}\right)
%\longrightarrow \int_{(s,\infty)\times [0,1]}\theta \chi_0(\dd r, \dd s),
%\]
%then for $\{k_n\}$ satisfying \eqref{eq:cond1}, we see that
%\[
%k_n\int_{M (1-\eta)}^\infty \left(\frac{n}{k_n}\EE\left(\Theta_1\ind_{\{R_1>b(n/k_n)s\}}\right)- \theta_0 s^{-\alpha}\right)^2\frac{\dd s}{s}\to 0.
%\]
%This justifies the critrion in \eqref{eq:step2_map}, thus completing the proof of \eqref{eq:step2}.

Recall \eqref{e:center1st}, and applying the mapping
\[
(x,y)\mapsto \left(\int_1^\infty x(s)\frac{\dd s}{s}, y\right)
\]
to \eqref{e:center1st} gives
%\begin{align*}
%\sqrt{k_n}&\left(\frac{1}{k_n \theta_0}\sum_{i=1}^{k_n} \Theta^*_i\log\frac{R_i}{R_{(k_n)}} - \frac{1}{\alpha}\left(\frac{R_{(k_n)}}{b(n/k_n)} \right)^{-\alpha}, 
%\left(\frac{R_{(k_n)}}{b(n/k_n)}\right)^{-\alpha}-1\right)\\
%&\Rightarrow \left(\int_1^\infty W(y^{-\alpha})\frac{\dd y}{y}, -W(1)\right),
%\end{align*}
%which further implies
%Note that by \cite[Equation (9.28)]{resnickbook:2007}, {\sid{\Large
%    Why is the W in 52 the same as in 53?}}
%\begin{align}
%\label{eq:Rk}
%\sqrt{k_n}\left(\left(\frac{R_{(k_n)}}{b(n/k_n)}\right)^{-\alpha}-1\right)\Rightarrow -W(1),
%\end{align}
%which combined 
\begin{align}
\label{eq:step3}
\frac{\sqrt{k_n}}{\theta_0}\left(\frac{1}{k_n}\sum_{i=1}^{k_n} \Theta^*_i\log \frac{R_{(i)}}{R_{(k_n)}} - \frac{\theta_0}{\alpha}\right)
\Rightarrow \int_1^\infty \frac{W(y^{-\alpha})}{y}\dd y-\frac{1}{\alpha}W(1).
\end{align}

\subsection{Check conditions of the functional central limit theorem}\label{subsec:fclt}
We now present the functional central limit theorem given in \cite[Theorem~10.6]{pollard:1984}.
\begin{Theorem}
\label{thm:fclt}
Consider the triangular array $\{f_{n,i}(t): t\in T\}$ with envelop function $F_{n,i}$, independent within each row. Suppose also that $\{f_{n,i}\}$ satisfy
\begin{enumerate}
\item[(i)] $\{f_{n,i}\}$ are manageable;
\item[(ii)] For $X_n(t)= \sum_i (f_{n,i}(t)-\EE(f_{n,i}(t)))$,
$H(s,t)=\lim_{n\to\infty} \EE[X_n(t)X_n(s)]$ exists for every $s,t\in T$;
\item[(iii)] The envelope function satisfies $\limsup_{n\to\infty}\EE(F^2_{n,i})<\infty$, and
\[
\sum_i\EE\left(F^2_{n,i}\ind_{\{F_{n,i}>\eta\}}\right)\to 0,
\]
for each $\eta>0$;
\item[(iv)] Let $\rho_n(s,t)=\left(\sum_i \EE\left(f_{n,i}(t)-f_{n,i}(s)\right)^2\right)^{1/2}$, then 
the limit $\rho(s,t)=\lim_{n\to\infty}\rho_n(s,t)$ is well-defined, and for deterministic sequences $\{s_n\}$, $\{t_n\}$,
if $\rho(s_n,t_n)\to 0$, then $\rho_n(s_n,t_n)\to 0$.
\end{enumerate}
Then $X_n$ converges to a Gaussian process with zero mean and covariance given by $H$.
\end{Theorem}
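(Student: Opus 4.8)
The plan is to follow the classical two-step recipe for functional central limit theorems: first establish convergence of the finite-dimensional distributions, and then upgrade to process-level convergence via stochastic equicontinuity. This is precisely the argument of \cite[Ch.~10]{pollard:1984}; what follows is a sketch of how I would carry it out.

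\emph{Finite-dimensional convergence.} Fix $t_1,\dots,t_m\in T$ and constants $c_1,\dots,c_m$, and set $g_{n,i}:=\sum_{j=1}^m c_j f_{n,i}(t_j)$, so that $\sum_{j}c_j X_n(t_j)=\sum_i\bigl(g_{n,i}-\EE g_{n,i}\bigr)$ is a row sum of a triangular array of independent, centered summands. By (ii) and polarization, $\mathrm{Var}\bigl(\sum_j c_j X_n(t_j)\bigr)$ converges to $\sum_{j,\ell}c_jc_\ell H(t_j,t_\ell)$; moreover $|g_{n,i}|\le\bigl(\sum_j|c_j|\bigr)F_{n,i}$, so the square-integrability condition $\limsup_n\sum_i\EE F_{n,i}^2<\infty$ and the Lindeberg-type display in (iii) furnish the Lindeberg condition for the array $\{g_{n,i}\}$. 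The Lindeberg--Feller theorem then gives asymptotic normality of $\sum_j c_j X_n(t_j)$, and the Cram\'er--Wold device yields joint convergence of $(X_n(t_1),\dots,X_n(t_m))$ to the centered Gaussian vector with covariance $\bigl(H(t_j,t_\ell)\bigr)_{j,\ell}$.

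\emph{Stochastic equicontinuity.} This is the substantive step: one must show that for every $\epsilon>0$,
\[
\lim_{\delta\downarrow 0}\ \limsup_{n\to\infty}\ \PP^{*}\Bigl(\sup_{\rho(s,t)<\delta}\bigl|X_n(t)-X_n(s)\bigr|>\epsilon\Bigr)=0.
\]
The route is: symmetrize the array by introducing i.i.d.\ sign variables, which reduces the problem to the oscillation of the symmetrized process; conditionally on the data this process is sub-Gaussian with respect to the random $L_2$ pseudometric $d_n(s,t):=\bigl(\sum_i(f_{n,i}(t)-f_{n,i}(s))^2\bigr)^{1/2}$. Apply Pollard's chaining / maximal inequality, in which \emph{manageability} (i) bounds the packing numbers of the function class uniformly so that the relevant entropy integral is finite. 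One then replaces the random pseudometric $d_n$ by the deterministic $\rho_n$ uniformly over pairs at small $\rho$-distance: here $\limsup_n\sum_i\EE F_{n,i}^2<\infty$ controls the overall size of $d_n$, the Lindeberg piece of (iii) prevents any single summand from dominating $d_n^2$, and (iv) together with $\rho_n\to\rho$ guarantees that the oscillation is indexed by small balls of the correct, \emph{limiting} pseudometric. Finally, de-symmetrize using the standard comparison between the original and symmetrized oscillations, which is legitimate because the centered summands are uniformly asymptotically negligible by (iii).

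\emph{Conclusion.} Combining finite-dimensional convergence with stochastic equicontinuity, and with total boundedness of $(T,\rho)$ (part of Pollard's framework, and in the present applications checked directly), the standard extension/portmanteau argument gives $X_n\Rightarrow X$ in $\ell^\infty(T)$, where $X$ is the zero-mean Gaussian process with covariance $H$ and $\rho$-uniformly continuous paths; see \cite[Theorem~10.6]{pollard:1984}. The main obstacle is the equicontinuity step, and within it the delicate passage from the data-dependent pseudometric $d_n$ to the deterministic $\rho_n$ --- it is exactly there that manageability is converted into a usable maximal inequality and that conditions (iii) and (iv) earn their keep.
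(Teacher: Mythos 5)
The theorem you are proving is not proved in the paper at all: it is quoted verbatim from \cite[Theorem~10.6]{pollard:1984} and then used as a black box (the paper merely verifies its hypotheses for the specific triangular array arising in the proof of Theorem~4.3). So there is no ``paper's own proof'' to compare against; the relevant comparison is with Pollard's argument, and your sketch reproduces it faithfully at a high level --- Cram\'er--Wold plus Lindeberg--Feller for the finite-dimensional laws (using (ii) for the variances and the Lindeberg piece of (iii)), then symmetrization, the sub-Gaussian chaining bound in which manageability (i) controls the packing numbers, the passage from the random $L_2$ pseudometric $d_n$ to the deterministic $\rho_n$ (where (iii) and (iv) do the work), and finally total boundedness of $(T,\rho)$ to conclude. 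You are right to flag the $d_n$-to-$\rho_n$ comparison as the delicate step; a fully rigorous version would spell out the truncation-and-square-function argument Pollard uses there, and would also note explicitly the de-symmetrization step and that total boundedness of $(T,\rho)$ is part of the hypotheses (it is implicit in Pollard's ``manageable'' framework). As a blind reconstruction of a cited theorem, your outline is correct and hits the load-bearing ideas; the main thing it does not contain is the detailed maximal-inequality computation, which you reasonably delegate to \cite{pollard:1984}.
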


To align with the statement in Theorem~\ref{thm:fclt}, we define
\[
f_{n,i}(t) := \frac{(\mu^2+\sigma^2)^{-1/2}}{ k_n}\Theta_i \epsilon_{R_i/b(n/k_n)}(t^{-1/\alpha},\infty),\qquad t\in (0,1],
\]
and the envelope function 
\[
F_{n,i} :=  \frac{(\mu^2+\sigma^2)^{-1/2}}{ k_n} \epsilon_{R_i/b(n/k_n)}(1,\infty).
\]
By Definition~7.9 of \cite{pollard:1984}, we see that $\{f_{n,i}\}$ are manageable.
Also, with $$X_n(t) = \sum_{i=1}^n \left(f_{n,i}(t)-\EE[f_{n,i}(t)]\right),$$
we have 
\begin{align*}
\EE&\left(X_n(t)X_n(s)\right)= \frac{(\mu^2+\sigma^2)^{-1}}{ k_n}\sum_{i=1}^n \EE\left(\Theta_i^2 \epsilon_{R_i/b(n/k_n)}\bigl((t\wedge s)^{-1/\alpha},\infty\bigr)\right)\\
& - \frac{(\mu^2+\sigma^2)^{-1}}{ k_n}\sum_{i=1}^n \EE\left(\Theta_i \epsilon_{R_i/b(n/k_n)}\bigl(t^{-1/\alpha},\infty\bigr)\right)\EE\left(\Theta_i \epsilon_{R_i/b(n/k_n)}\bigl(s^{-1/\alpha},\infty\bigr)\right)\\
&\longrightarrow t\wedge s,\qquad n\to\infty.
\end{align*}
For the envelope function $F_{n,i}$, we have
\begin{align*}
\limsup_{n\to\infty}\sum_{i=1}^n\EE(F^2_{n,i})\to (\mu^2+\sigma^2)^{-1/2},
\end{align*}
and for $\delta>0$,
\begin{align*}
\sum_{i=1}^n\EE(F^{2+\delta}_{n,i})&=(\mu^2+\sigma^2)^{-1/2}\frac{n}{k_n^{2+\delta}}\PP\left(\frac{R_1}{b(n/k_n)}>1\right)\to 0,
\end{align*}
which further implies for each $\eta>0$,
\begin{align*}
\sum_{i=1}^n\EE(F^2_{n,i}\ind_{\{F_{n,i}>\eta\}})\to 0.
\end{align*}
Assume $t_1>t_2$, then for 
\[
\rho_n(t_1,t_2):= \left(\sum_{i=1}^n \EE\left(f_{n,i}(t_1)-f_{n,i}(t_2)\right)^2\right)^{1/2},
\]
we have
\begin{align*}
\rho_n(t_1,t_2) &= (\mu^2+\sigma^2)^{-1/2}\left(\frac{n}{k_n}\EE\left(\Theta_1\epsilon_{R_1/b(n/k_n)}(t_1^{-1/\alpha},t_2^{-1/\alpha}\right)\right)^{1/2}\\
&\longrightarrow (\mu^2+\sigma^2)^{-1/2}(t_1-t_2).
\end{align*}
Therefore, all conditions in Theorem~\ref{thm:fclt} are satisfied, which gives the conclusion that in $D(0,1]$,
\begin{align*}
X_n(t) &= \frac{(\mu^2+\sigma^2)^{-1/2}}{\sqrt{k_n}}\sum_{i=1}^n \left(\Theta_i \epsilon_{\frac{R_i}{b(n/k_n)}}(t^{-1/\alpha},\infty)-\EE\bigl(\Theta_i \epsilon_{\frac{R_i}{b(n/k_n)}}(t^{-1/\alpha},\infty)\bigr)\right)\nonumber\\
&\Rightarrow W(t).
\end{align*}

%\begin{thebibliography}{}

%\end{thebibliography}

%\input{
%\bibliography{bibfile}
\end{document}